\newcommand*{\C}{\ensuremath{\mathbb C}\xspace}
\newcommand*{\E}{\ensuremath{\mathcal E}\xspace}
\newcommand*{\F}{\ensuremath{\mathcal F}\xspace}
\newcommand*{\Ff}{\ensuremath{\mathbb F}\xspace}
\newcommand*{\Ic}{\ensuremath{\mathcal I}\xspace}
\newcommand*{\M}{\ensuremath{\mathcal M}\xspace}
\newcommand*{\Nc}{\ensuremath{\mathcal N}\xspace}
\newcommand*{\Oo}{\ensuremath{\mathcal O}\xspace}
\newcommand*{\PP}{\ensuremath{\mathbb P}\xspace}
\newcommand*{\Z}{\ensuremath{\mathbb Z}\xspace}
\DeclareMathOperator{\bd}{bd}
\DeclareMathOperator{\codim}{codim}
\DeclareMathOperator{\trdeg}{tr.deg}
\let\ph\varphi
\newcommand*{\lst}[3][1]{\ensuremath{#2_{#1}, \ldots, #2_{#3}}\xspace}
\newtheorem{proposition}{Proposition}[section]
\newtheorem{lemma}[proposition]{Lemma}
\newtheorem{corollary}[proposition]{Corollary}
\theoremstyle{remark}
\newtheorem{note}[proposition]{Remark}
\theoremstyle{definition}
\newtheorem{definition}[proposition]{Definition}
\newtheorem{Not}[proposition]{Notation}
\newtheorem{constr}[proposition]{Construction}
\title[Algebraic and
  non-algebraic neighborhoods]{On algebraic and
  non-algebraic neighborhoods of rational curves}
\author{Serge Lvovski}
\address{National Research University Higher School of Economics,
  Moscow, Russia\hfil\break
Federal State Institution "Scientific-Research Institute for System
Analysis of the Russian Academy of Sciences" (SRISA) 
}
\email{lvovski@gmail.com}
\keywords{Neighborhoods of rational curves, surfaces of minimal
  degree, blowup}
\subjclass{32H99, 14J26}
\thanks{This study was partially supported by the HSE University Basic
  Research Program and by SRISA research project FNEF-2022-0007
  (Reg. No 1021060909180-7-1.2.1).} 
\begin{document}

\begin{abstract}
We prove that for any $d>0$ there exists an embedding of the Riemann
sphere $\PP^1$ in a smooth complex surface, with
self-intersection~$d$, such that the germ of this embedding cannot be
extended to an embedding in an algebraic surface but the field of
germs of meromorphic functions along $C$ has transcendence degree~$2$
over~\C. We give two different constructions of such neighborhoods,
either as blowdowns of a neighborhood of the smooth plane conic, or as
ramified coverings of a neighborhood of a hyperplane section of a
surface of minimal degree.

The proofs of non-algebraicity of these neighborhoods are based on a
classification, up to isomorphism, of algebraic germs of embeddings of
$\PP^1$, which is also obtained in the paper.
\end{abstract}

\maketitle

\section{Introduction}

In this paper we study germs of embeddings of the Riemann sphere aka
$\PP^1$ in smooth complex surfaces (see precise definitions in
Section~\ref{subsec} below). The structure of such germs for which the
degree of the normal bundle, which is equal to the self-intersection
index of the curve is question, is non-positive, is well known and
simple: if $(C\cdot C)=d\le 0$, then for any such $d$ there exists
only one germ up to isomorphism, and these germs are algebraic (see
\cite{Grauert} for the negative degree case and \cite{Savelyev} for
the zero degree case). Germs of embeddings $(C,U)$, where
$C\cong\PP^1$, $U$ is a smooth complex surface, and $(C\cdot C)>0$,
are way more diverse.

Let us say that a germ of neighborhood of $C\cong\PP^1$ is algebraic
if it is isomorphic to the germ of embedding of $C$ in a smooth
algebraic surface.  M.~Mishustin, in his paper~\cite{Mishustin},
showed that the space of isomorphism classes of germs of embeddings of
$C\cong\PP^1$, $(C\cdot C)>0$, in smooth surfaces, is
infinite-dimensional, so one would expect that ``most'' germs of such
embeddings are not algebraic. In this paper we will
construct explicit examples of non-algebraic germs of embeddings of
$\PP^1$.

An interesting series of such examples was constructed in a recent
paper by M.~Falla Luza and F.~Loray~\cite{FLL2022}. For each $d>0$
they construct an embedding of $C\cong\PP^1$ is a smooth surface such
that $(C\cdot C)=d$ and the field of germs of meromorphic functions
along $C$ consists only of constants. For curves on an algebraic
surface this is impossible, so the germs of neighborhoods constructed
in~\cite{FLL2022} are examples of non-algebraic germs.

In Section~5.3 of the paper~\cite{FLL-MMJ} the same authors give an example of a
non-algebraic germ of neighborhood of $\PP^1$, with
self-intersection~$1$, for which the field of germs of meromorphic
functions is as big as possible: it has transcendence degree~$2$ over~\C.

The aim of this paper is to construct two series of explicit
examples of non-algebraic germs of neighborhoods of $\PP^1$ for which
the field of germs of meromorphic functions is as big as possible. 

The first of these series is constructed in Section~\ref{sec:blowdowns}.
Specifically, for any integer $m\ge 5$ we construct a non-algebraic
germ of neighborhood $(C,U)$ such that $C\cong \PP^1$, $(C\cdot C)=m$,
and one can blow up $m-4$ points on $C$ to obtain the germ of
neighborhood of the conic in the plane. The field of germs of
meromorphic functions along~$C$ has transcendence degree $2$ over~\C
(Construction~\ref{constr1} and Proposition~\ref{prop:constr1}).

The second series of examples is constructed in
Section~\ref{sec:ramified}. For any positive integer $n$ we construct
a non-algebraic germ of neighborhood $(C,U)$ such that $C\cong \PP^1$,
$(C\cdot C)=n$, and the field of germs of meromorphic functions along $C$ 
has transcendence degree $2$ over~\C, as a ramified two-sheeted covering of the germ of a
neighborhood of $\PP^1$ with self-intersection~$2n$. This construction
is a generalization of that from~\cite[Section 5.3]{FLL-MMJ} (and
coincides with the latter for $n=1$), but the method of proof if
non-algebraicity is different. See Construction~\ref{constr2} and
Proposition~\ref{prop:constr2}. 

I do not know whether the germs of neighborhoods constructed in Sections~\ref{sec:blowdowns} and~~\ref{sec:ramified} 
are isomorphic.

The proofs of non-algebraicity of the neighborhoods constructed in
Sections~\ref{sec:blowdowns} and~~\ref{sec:ramified} are based on the
classification of algebraic germs of neighborhoods of $\PP^1$. It
turns out that any such germ with self-intersection $d>0$ is
isomorphic to the germ of a hyperplane section of a surface of
degree~$d$ in $\PP^{d+1}$; moreover, any isomorphism of germs of such
embeddings $(C_1,F_1)$ and $(C_2,F_2)$ (where $F_j$, $j=1,2$, are
surfaces of degree $d$ in $\PP^{d+1}$ and $C_j$ a hyperplane section
of $F_j$) is induced by a linear isomorphism between the surfaces
$F_1,F_2\subset\PP^{d+1}$ (Propositions~\ref{prop:alg.neighborhoods}
and~\ref{prop:rigid}). The key role in the proofs is played by
Lemma~\ref{lemma:extn2}.

The paper is organized as follows. In Section~\ref{recap} we recall
the properties of surfaces of degree $d$ in $\PP^{d+1}$. In
Section~\ref{sec:C.C>0} we obtain a classification of algebraic
neighborhoods of~$\PP^1$. Finally, in Sections~\ref{sec:blowdowns}
and~\ref{sec:ramified}  we construct two series of examples of
non-algebraic neighborhoods.

\subsection*{Acknowledgements}
I am grateful to Frank Loray and Grigory Merzon for useful discussions. 

\subsection{Notation and conventions}\label{subsec}

All algebraic varieties are varieties over~\C. All topological
terminology pertains to the classical (complex) topology.

Suppose we are given the pairs $(C_1,S_1)$ and $(C_2,S_2)$,
where $C_j$, $j=1,2$, are projective algebraic curves contained in
smooth complex analytic surfaces~$S_j$. We will say that these two pairs
are \emph{isomorphic as germs of neighborhoods} if there exists an
isomorphism $\ph\colon V_1\to V_2$, where $C_1\subset V_1\subset S_1$,
$C_2\subset V_2\subset S_2$, $V_1$ and $V_2$ are open, such that
$\ph(C_1)=C_2$. In this paper we are mostly concerned with germs of
neighborhoods, but sometimes, abusing the language, we will write
``neighborhood'' instead of ``germ of neighborhoods''; this should not
lead to confusion.

If $C$ is a projective algebraic curve on a smooth complex analytic
surface~$S$, then a \emph{germ of holomorphic}
(resp.\ \emph{meromorphic}) \emph{function along~$S$} is an
equivalence class of pairs $(U,f)$, where $U$ is a neighborhood of $C$
in $S$, $f$ is a holomorphic (resp.\ meromorphic) function on $U$, and
$(U_1,f_1)\sim (U_2,f_2)$ if there exists a neighborhood $V\supset C$,
$V\subset U_1\cap U_2$, on which $f_1$ and $f_2$ agree. Germs of
holomorphic (resp.\ meromorphic) functions along $C\subset S$ form a
ring (resp.\ a field). The field of germs of meromorphic functions
along~$C\subset S$ will be denoted, following the
paper~\cite{FLL2022}, by~$\M(S,C)$. If the self-intersection index
$(C\cdot C)$ is positive, then, according to Theorem~2.1
from~\cite{Orevkov}, the curve $C\subset S$ has a fundamental system
of pseudoconcave neighborhoods, and it follows from~\cite[Th\'eor\`eme
  5]{Andreotti} that $\trdeg_\C\M(S,C)\le2$ (here and below,
$\trdeg_\C$ means ``transcendence degree over~\C'').

A germ of neighborhood of a projective curve~$C$ will be called
\emph{algebraic} if it is isomorphic, as a germ, to the germ of
neighborhood of $C$ in $X$, where $X\supset C$ is a smooth projective
algebraic surface; since desingularization of algebraic surfaces over
\C exists, one may as well say that a germ of neighborhood of $C$ is
algebraic if it is isomorphic to the germ of a neighborhood $C\subset
X$, where $X$ is a an arbitrary smooth algebraic surface, not
necessarily projective. In this case, $\trdeg_\C\M(X,C)=2$ since this
field contains the field of rational functions on~$X$.

\begin{note}\label{remark}
A germ of neighborhood of a projective curve~$C$ \emph{with positive
  self-intersection} is algebraic if and only if it is isomorphic to
the germ of embedding of~$C$ into a compact complex surface. Indeed,
any smooth complex surface containing a projective curve with positive self-intersection, can be embedded in $\PP^N$ for some~$N$ (see
\cite[Chapter~IV, Theorem 6.2]{BPHVdV}).
\end{note}

A projective subvariety $X\subset\PP^N$ is called
\emph{non-degenerate} if it does not lie in a hyperplane, and
\emph{linearly normal} if the natural homomorphism from
$H^0(\PP^N,\Oo_{\PP^N}(1))$ to $H^0(X,\Oo_X(1))$ is surjective. If $X$
is non-degenerate, the latter condition holds if and only if $X$ is
not an isomorphic projection of a non-degenerate subvariety of
$\PP^{N+1}$.

Non-degenerate projective subvarieties $X_1\subset\PP^{N_1}$ and
$X_2\subset\PP^{N_2}$ will be called \emph{projectively isomorphic} if
$N_1=N_2$ and there exists a linear isomorphism $f\colon
\PP^{N_1}\to\PP^{N_2}$ such that $f(X_1)=X_2$ .

If $C_1$ and $C_2$ are two projective algebraic curves on a smooth
complex surface, then $(C_1\cdot C_2)$ is their intersection index.

The terms ``vector bundle'' and ``locally free sheaf'' will be used
interchangeably. 

If $C$ is a smooth curve on a smooth complex surface~$X$, then
$\Nc_{X|C}$ is the normal bundle to $C$ in~$X$. 

If \E is a vector bundle on an algebraic variety, then $\PP(\E)$ (the
projectivisation of \E) is the algebraic variety such that its points
are lines in the fibers of~\E.

If \F is a coherent sheaf on a complex space~$X$, we will sometimes
write $h^i(\F)$ instead of $\dim H^i(X,\F)$.

By definition, a projective surface $X$ has \emph{rational
  singularities} if $p_*\Oo_{\bar X}=\Oo_X$ and $R^1p_*\Oo_{\bar X}=0$
for some (hence, any) desingularization $p\colon\bar X\to X$.

If $f\colon S\to T$  is a dominant morphism of smooth
projective algebraic surfaces, we will say that its \emph{critical
  locus} is the set
\begin{equation*}
R=f(\{x\in S\colon \text{$df_x$ is degenerate}\})  \subset T,
\end{equation*}
and that its \emph{branch divisor} $B\subset T$ is the union of
one-dimensional components of~$R$.

\section{Recap on surfaces of minimal degree}\label{recap}

In this section we will recall, without proofs, some well-known
results. Most of the details can be found in~\cite{EisHar}.

If $X\subset\PP^N$ is a non-degenerate irreducible projective variety,
then 
\begin{equation}\label{eq:deg>codim}
\deg X\ge \codim X+1,
\end{equation}
and there exists a classification of the varieties for which the
lower bound~\eqref{eq:deg>codim} is attained. We reproduce this
classification for the case~$\dim X=2$.

\begin{Not}
For any integer $n\ge0$, put $\Ff_n=\PP(\Oo_{\PP^1}\oplus
\Oo_{\PP^1}(n))$.    
\end{Not}
The surface $\Ff_0$ is just the quadric $\PP^1\times\PP^1$; if $n>0$,
then the natural projection $\Ff_n\to\PP^1$ has a unique section
$E\subset\Ff_n$ such that $(E\cdot E)=-n$. The section $E$ will be
called \emph{the exceptional section} of~$\Ff_n$. The divisor class
group of $\Ff_n$ is generated by the class~$e$ of the exceptional
section and the class $f$ of the fiber (if $n=0$, we denote by
$e$ and $f$ the classes of lines of two rulings on
$\PP^1\times\PP^1$). One has
\begin{equation}\label{eq:mult.table}
  (e\cdot e)=-n,\quad (e\cdot f)=1,\quad (f\cdot f)=0.
\end{equation}

If $r\geqslant 0$ is an integer and $(n,r)\ne(0,0)$, then the complete
linear system $|e+(n+r)f|$ on $\Ff_n$ has no basepoints and defines a
mapping $\Ff_n\to \PP^{n+2r+1}$.

\begin{Not}
If $n,r$ are non-negative integers and $(n,r)\ne(0,0)$, then
by $F_{n,r}\subset\PP^{n+2r+1}$   we will denote the image of the
mapping $\Ff_n\to\PP^{n+2r+1}$ defined by the complete linear system
$|e+(n+r)f|$. 
\end{Not}

It follows from~\eqref{eq:mult.table} that the variety
$F_{n,r}\subset\PP^{n+2r+1}$ is a surface of degree~$n+2r$. If $r>0$,
the surface $F_{n,r}$ is smooth and isomorphic to $\Ff_n$. The surface
$F_{n,0}$ with $n\ge 2$ is the cone over the normal rational curve of
degree $n$ in $\PP^n$ (this cone is obtained by contracting the exceptional
section on $\Ff_n$), and the surface $F_{1,0}$ is just the plane; the
surfaces $F_{n,0}$ are normal and, moreover, have rational
singularities. If $n>0$ and $r>0$, then the exceptional section on
$F_{n,r}$ is a rational curve of degree~$r$.  Finally, the surface
$F_{0,1}\subset\PP^3$ is the smooth quadric.

Recall that the quadratic Veronese surface $V\subset\PP^5$ is the image of the mapping
$v\colon \PP^2\to\PP^5$ defined by the formula
\begin{equation}\label{eq:Veronese}
	v\colon (x_0:x_1:x_2)\mapsto (x_0^2:x_1^2:x_2^2:x_0x_1:x_0x_2:x_1x_2);
\end{equation}
one has $\deg V=4$. The mapping $v$ induces an isomorphism from $\PP^2$
to~$V$; hyperplane sections of $V$ are images of conics in~$\PP^2$.

\begin{proposition}\label{prop:min.degree}
If $X\subset\PP^N$ is a non-degenerate irreducible projective surface,
then $\deg X\ge N-1$ and the bound is attained if and only if either
$X=F_{n,r}$, where $(n,r)\ne(0,0)$, or $X=V\subset\PP^5$, where $V$ is
the quadratic Veronese surface.  
\end{proposition}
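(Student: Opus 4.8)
The lower bound $\deg X\ge N-1$ is the case $\dim X=2$ of~\eqref{eq:deg>codim}, so the plan is to concentrate on the surfaces attaining equality. Assume $\deg X=N-1$ and let $C=X\cap H$ be a general hyperplane section. By Bertini's theorem $C$ is irreducible and non-degenerate in $H\cong\PP^{N-1}$, and its singularities can only lie over those of~$X$; since $\deg C=\deg X=N-1=\dim H$, the curve $C$ again attains the minimal bound, so it is a rational normal curve, in particular $C\cong\PP^1$. Thus $X$ carries a one-parameter family of hyperplane sections each a copy of~$\PP^1$, which is the first hint that $X$ is rational.

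First I would fix the homological shape of~$X$. Varieties of minimal degree are arithmetically Cohen--Macaulay; for a surface this follows by descending from $C$, using that a rational normal curve is projectively normal and that this property lifts from a general hyperplane section. From this I would read off that $X$ is linearly normal and that $h^1(\Oo_X)=0$, so that $X$ is a (possibly singular) rational surface.

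In the \emph{smooth} case $X$ is then a minimal rational surface: were $X$ a nontrivial blowup, contracting a $(-1)$-curve would exhibit $X$ as an isomorphic projection of a non-degenerate surface in $\PP^{N+1}$ and so contradict the linear normality forced by minimality. Hence $X\cong\PP^2$ or $X\cong\Ff_n$. On $\Ff_n$ I would write the hyperplane class as $H|_X\sim e+kf$; the multiplication table~\eqref{eq:mult.table} gives $(H|_X)^2=2k-n$ and $h^0(e+kf)=2k-n+2$, so linear normality together with $\deg X=N-1$ forces $k=n+r$ with $r\ge0$, which is exactly the embedding defining $F_{n,r}$, and smoothness excludes $r=0$. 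On $\PP^2$ I would write $H|_X=\Oo_{\PP^2}(a)$; comparing $\deg X=a^2$ with $\codim X+1$ leaves only $a=1$, the linearly embedded plane $F_{1,0}$, and $a=2$, the Veronese surface~$V$ of~\eqref{eq:Veronese}, since $a\ge3$ already has $a^2$ strictly larger than $\codim X+1$.

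In the \emph{singular} case the point is that a singular non-degenerate surface of minimal degree must be a cone: a multiplicity/projection argument shows that its singular locus is a single point~$p$ and that projection from~$p$ carries $X$ isomorphically, off the vertex, onto a rational normal curve of degree $N-1$ in $\PP^{N-1}$, realizing $X$ as the cone $F_{n,0}$ over that curve, with $n=N-1\ge2$. I expect the two genuinely rigid steps---proving that $X$ is minimal rational in the smooth case and ruling out non-conical singularities in the singular case---to be the main obstacle, since both are exactly where the minimal-degree hypothesis enters essentially (through the Cohen--Macaulay property and projection); once $X$ is identified abstractly, the determination of the embedding is the routine intersection computation with~\eqref{eq:mult.table} sketched above. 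An alternative, following~\cite{EisHar}, is to run the whole classification by induction on dimension, lifting the ``scroll or Veronese'' structure from a general hyperplane section.
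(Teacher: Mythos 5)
The paper itself gives no proof of Proposition~\ref{prop:min.degree}: Section~\ref{recap} explicitly recalls the classification ``without proofs'' from~\cite{EisHar}, so your sketch can only be measured against the classical argument. The lower bound, the reduction of a general hyperplane section to a rational normal curve, the closing intersection computations on $\Ff_n$ and $\PP^2$, and the outline of the cone case are all essentially sound.

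The smooth case, however, contains a genuine error. You assert that $X$ must be a relatively minimal rational surface, because a $(-1)$-curve would ``exhibit $X$ as an isomorphic projection of a non-degenerate surface in $\PP^{N+1}$'' and so contradict linear normality. The cubic scroll $F_{1,1}\subset\PP^4$ refutes this: it is a smooth, linearly normal surface of minimal degree isomorphic to $\Ff_1=$ the blowup of $\PP^2$ at a point, and its exceptional section is a $(-1)$-curve (embedded as the directrix line). Indeed, by~\eqref{eq:deg>codim} there is no non-degenerate surface of degree $3$ in $\PP^5$ at all, so $F_{1,1}$ cannot be a projection of anything, yet it is not relatively minimal. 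The mechanism fails because contracting a $(-1)$-curve $E$ with $H\cdot E=m$ yields on the contracted surface a linear system of degree $H^2+m^2$ whose dimension exceeds $\dim|H|$ by at most $m(m+1)/2\le m^2$, so the minimal-degree inequality is again satisfied and linear normality of $X$ is untouched; internal projection from a point \emph{of} a variety is not an isomorphic projection. What is missing is precisely the core of the smooth case: showing that a smooth non-degenerate surface of minimal degree other than $\PP^2$ and $V$ is swept out by lines, equivalently that the hyperplane class is $e+kf$ with coefficient $1$ on $e$ --- which you assume rather than prove (note also that on $\Ff_0$ the classes $ae+f$, $a\ge2$, are very ample of sectional genus $0$ and must be disposed of by swapping the rulings). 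This is where the induction of~\cite{EisHar}, or an adjunction argument with the sectional genus, does the real work. A small further slip: in the cone case the projection from the vertex does not carry $X$ ``isomorphically'' onto a rational normal curve --- it is a fibration with fibre $\C$; the correct statement is that it realizes $X$ as the cone over its image.
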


If $(n,r)\ne (n',r')$, then the surfaces $F_{n,r}$ and $F_{n',r'}$ are
\emph{not} projectively isomorphic, and none of them is projectively
isomorphic to the Veronese surface~$V$. (Indeed, if $n\ne n'$ then
$F_{n,r}$ and $F_{n',r'}$ are not isomorphic even as abstract
varieties, and if $n=n'$ but $r\ne r'$ then their degrees are
different; speaking of the Veronese surface, it does not contain lines
while each $F_{n,r}$ is swept by lines.)

Suppose now that $X\subset \PP^{N+1}$, $N\ge 2$, is a surface of
minimal degree (i.e., of degree~$N$) and $p\in F$ is a smooth point
(i.e., either $X\ne F_{n,0}$ and $p$ is arbitrary or $X=F_{n,0}$ and
$p$ is not the vertex of the cone). Let $\pi_p$ denote the projection
from $\PP^{N+1}$ to $\PP^N$ with the center~$p$.

\begin{proposition}\label{prop:proj}
In the above setting, the projection $\pi_p$ induces a birational
mapping from $X$ onto its image. If $X'\subset \PP^N$ is \textup(the
closure of\textup) the image of $\pi_p\colon X\dasharrow \PP^N$,
then $X'$ is also a surface of minimal degree.

If $X=F_{n,r}$, where $n>0$ and $r>0$, then $X'=F_{n+1,r-1}$ if $p$
lies on the exceptional section and $X'=F_{n-1,r}$ otherwise.

If $X=F_{0,r}$, $r>0$, then $X'=F_{1,r-1}$.

If $X=F_{n,0}$, $n\ge 2$, then $X'=F_{n-1,0}$.

Finally, if $X=V\subset\PP^5$, then $X'=F_{1,1}\subset\PP^4$.
\end{proposition}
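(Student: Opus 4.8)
The plan is to prove Proposition~\ref{prop:proj} by direct computation with the linear systems defining the surfaces of minimal degree, since each such surface (other than the Veronese) is explicitly realized as the image of $\Ff_n$ under $|e+(n+r)f|$.

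First I would establish the birationality claim. The surface $X$ is the image of a birational morphism from $\Ff_n$ (or from $\PP^2$ in the Veronese case), and projection from a smooth point $p\in X$ is generically one-to-one onto its image precisely when the lines of $\PP^{N+1}$ through $p$ meet $X$ in at most finitely many further points for the generic such line. Since $\deg X=N$ and a general line through $p$ meets $X$ in $p$ together with $N-1$ residual points, the fibers of $\pi_p|_X$ are finite, so $\pi_p$ is birational onto its image $X'$ as soon as we rule out $X$ being a cone with vertex $p$; but $p$ is assumed smooth, so this is fine. To see that $X'$ again has minimal degree, note that $X'\subset\PP^N$ is non-degenerate (the projection of a non-degenerate surface from a point on it stays non-degenerate) and $\deg X'=\deg X-1=N-1$, since projecting from one point on the surface drops the degree by exactly one when the map is birational. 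By Proposition~\ref{prop:min.degree} applied in $\PP^N$, such an $X'$ must be one of the listed surfaces, so it suffices to identify \emph{which} one.

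To identify $X'$ in each case, I would pull back the projection to the resolution $\Ff_n$ and track what happens to the linear system. Projection from $p\in X$ corresponds to passing from $|e+(n+r)f|$ to its sublinear system of divisors passing through the preimage of $p$; after resolving the base point this becomes the complete linear system attached to the blown-up surface, and blowing down the appropriate exceptional curve returns us to a new $\Ff_{n'}$. Concretely, if $p$ lies off the exceptional section, subtracting a fiber-type condition changes the embedding by $e+(n+r)f\mapsto e+(n+r-1)f$ on the same $\Ff_n$, which realizes $F_{n-1,r}$ after re-indexing via $e_{n-1}=e_n+f$; if $p$ lies on the exceptional section $E$, the relevant curve to contract is different and one lands on $F_{n+1,r-1}$. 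The degenerate and boundary cases ($F_{0,r}$, $F_{n,0}$ with $n\ge2$, and the plane $F_{1,0}$) follow the same bookkeeping, while the Veronese case $V\to F_{1,1}$ I would handle separately by noting that the projection of $V$ from a point is the well-known smooth cubic scroll in $\PP^4$, which is exactly $F_{1,1}$.

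The main obstacle will be the careful bookkeeping of the linear-system/intersection-number changes under projection, especially keeping straight the re-indexing of the exceptional section when we pass between $\Ff_n$ and $\Ff_{n\pm1}$: the class $e$ has different self-intersection on different Hirzebruch surfaces, so the identity $e+(n+r)f$ must be rewritten correctly in the divisor-class basis of the target surface before one can read off the new pair $(n',r')$. I would be especially careful that the hypothesis ``$p$ lies on the exceptional section'' versus ``$p$ lies off it'' is the only distinction that matters (the choice of $p$ within each stratum being irrelevant by the transitivity of the automorphism group of $\Ff_n$ on each stratum), which both legitimizes the case analysis and ensures the answer depends only on $(n,r)$ and the incidence of $p$ with $E$.
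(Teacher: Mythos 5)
The paper states this proposition without proof --- Section~\ref{recap} is explicitly a recap of classical facts, with a pointer to \cite{EisHar} --- so there is no in-paper argument to compare against; I am judging your proposal on its own terms. Your overall architecture is the standard one and is sound in outline: non-degeneracy of the image, the degree drop under projection from a smooth point, the lower bound from Proposition~\ref{prop:min.degree}, and an elementary-transformation computation on $\Ff_n$ to identify $X'$. But two steps are genuinely off as written. First, the birationality argument is circular: ``the fibers of $\pi_p|_X$ are finite, so $\pi_p$ is birational'' is a non sequitur (generically finite does not mean degree one; ruling out that $X$ is a cone with vertex $p$ only rules out positive-dimensional fibers), and you then compute $\deg X'=\deg X-1$ by \emph{assuming} birationality. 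The correct order is to write $\deg X-\operatorname{mult}_pX=\deg(\pi_p|_X)\cdot\deg X'$, i.e.\ $N-1=\deg(\pi_p|_X)\cdot\deg X'$, and combine this with $\deg X'\ge N-1$ (non-degeneracy of $X'$, which you justify correctly) to force $\deg(\pi_p|_X)=1$ and $\deg X'=N-1$ simultaneously. All the ingredients are in your text, but as arranged the argument does not close.

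Second, the one concrete formula you offer for identifying $X'$ is wrong: the system $|e+(n+r-1)f|$ ``on the same $\Ff_n$'' defines $F_{n,r-1}\subset\PP^{n+2r-1}$, a surface of degree $N-2$ in the wrong ambient space, and it cannot be ``re-indexed'' into $F_{n-1,r}$ because $\Ff_n$ and $\Ff_{n-1}$ are non-isomorphic surfaces. What actually happens is an elementary transformation: blow up $p$, pass to the system $\sigma^*(e+(n+r)f)-G$ (with $G$ the exceptional curve), check that this system contracts exactly the strict transform $\sigma^*f-G$ of the fiber through $p$ (and, when $r=1$ and $p\in E$, also the strict transform of $E$), and compute the self-intersection of the image of the exceptional section on the contracted surface: it equals $-(n-1)$ if $p\notin E$ and $-(n+1)$ if $p\in E$, which is precisely what distinguishes $F_{n-1,r}$ from $F_{n+1,r-1}$. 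You correctly flag this bookkeeping as the main obstacle and correctly reduce to the incidence of $p$ with $E$ via homogeneity, but the sample computation you give would not survive the bookkeeping, and the boundary case $F_{n,1}$ with $p\in E$ (where the image is the cone $F_{n+1,0}$ and two curves are contracted, cf.\ Proposition~\ref{prop:proj2}) needs separate care that ``the same bookkeeping'' does not automatically provide. The Veronese case, handled by quoting the cubic scroll, is fine.
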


These projections for surfaces of minimal degree~$\le 6$ are depicted in
Figure~\ref{fig:hierarchy}. Observe that no arrow points at the
surface~$V$; this fact will play a crucial role in the sequel.

\begin{figure}
\[
\xymatrix{
{\text{degree}}\\
6&F_{6,0}\ar[d]&F_{4,1}\ar[dl]\ar[d]&F_{2,2}\ar[dl]\ar[d]&F_{0,3}\ar[dl]\\
5& F_{5,0}\ar[d]&F_{3,1}\ar[dl]\ar[d]&F_{1,2}\ar[dl]\ar[d]\\
4&F_{4,0}\ar[d]&F_{2,1}\ar[dl]\ar[d]&F_{0,2}\ar[dl]&V\ar[dll]\\
3&F_{3,0}\ar[d]&F_{1,1}\ar[dl]\ar[d]\\
2&F_{2,0}\ar[d]&F_{0,1}\ar[dl]\\
1&F_{1,0}
}
\]  
  \caption{Projections of surfaces of minimal degree}\label{fig:hierarchy}
  
\end{figure}
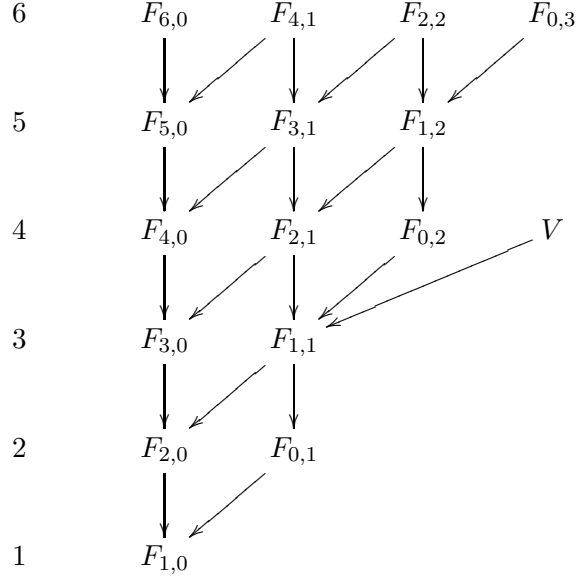

The birational transformations induced by the projections form
Proposition~\ref{prop:proj} can be described explicitly.

\begin{proposition}\label{prop:proj2}
Suppose that $F\subset\PP^N$ is a surface of minimal degree, $p\in F$
is a non-singular point, and $\pi_p\colon F\dasharrow \PP^{N-1}$ is the
projection from~$p$. The projection $\pi_p$ acts on the surface $F$ as follows.

\begin{enumerate}
\item
If $F=F_{n,r}\subset \PP^{n+2r+1}$, where $r>1$, then the projection
$\pi_p$ blows up the point~$p$ and blows down the strict transform of
the only line on~$F$ passing through~$p$.

\item
If $F=F_{n,1}\subset\PP^{n+3}$, where $n>0$ (in this case the
exceptional section of $F\cong \Ff_n$ is a line), then two cases are
possible. 

If the point $p$ does not lie on the exceptional section, then the
projection $\pi_p$ blows up the point~$p$ and blows down the strict
transform of the only line passing through~$p$, and the image of the
projection is the surface $F_{n-1,1}\subset\PP^{n+2}$. If, on the other hand, $p$ lies
on the exceptional section, then the projection $\pi_p$ blows up the
point~$p$ and blows down both the strict transform of the fiber
passing through~$p$ and the strict transform of the exceptional
section; in this latter case the image of the projection is the cone
$F_{n+1,0}\subset\PP^{n+2}$ (the strict transform of the exceptional
section is blown down to the vertex of the cone).

\item
If $F=F_{0,1}\subset\PP^3$ (that is, if $F\subset\PP^3$ is the smooth
quadric), then the projection $\pi_p$ blows up the point~$p$ and blows
down strict transforms of the two lines passing through~$p$; the image
of the projection is, of course, just the plane.

\item
If $F=F_{n,0}\subset\PP^{n+1}$, where $n>1$ (that is, if $F$ is a cone
over the rational normal curve of degree $n$ in~$\PP^n$), then the
projection $\pi_p$ blows up the point~$p$ and blows down the strict
transform of the generatrix of the cone passing through~$p$; the image
of this projection is the cone $F_{n-1,0}\subset\PP^n$ (if $n=2$,
this cone is just the plane).

\item
Finally, if $F=V\subset \PP^5$ is the Veronese surface, then the
projection $\pi_p$ just blows up the point~$p$, and the image of this
projection is $F_{1,1}\subset\PP^3$.
\end{enumerate}
\end{proposition}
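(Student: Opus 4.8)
The plan is to analyze each surface of minimal degree separately, describing the projection $\pi_p$ as a composition of a single blowup followed by a sequence of blowdowns, and then identify the resulting surface using Proposition~\ref{prop:proj}. The underlying principle is standard: projecting a variety $X \subset \PP^{N+1}$ from a smooth point $p \in X$ corresponds, after blowing up $p$ on $X$, to the morphism that separates the point; a line through $p$ lying on $X$ gets contracted because it maps to a single point (its image under $\pi_p$ is the single direction of the line at $p$). So the combinatorics of the projection is governed entirely by the lines on $F$ passing through~$p$.

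\begin{proof}
The projection $\pi_p\colon F\dasharrow\PP^{N-1}$ is resolved by blowing up the point~$p$: if $\sigma\colon\widetilde F\to F$ is the blowup of~$p$, then $\pi_p\circ\sigma$ extends to a morphism on all of~$\widetilde F$. A curve $D\subset F$ with $p\in D$ is contracted by this morphism precisely when $\pi_p(D\setminus\{p\})$ is a single point, that is, when $D$ is a line through~$p$; conversely, lines through~$p$ are exactly the curves whose strict transforms are blown down, since a line maps to the point of $\PP^{N-1}$ recording its tangent direction at~$p$. Thus, to prove each case, I identify the lines on~$F$ passing through~$p$ and record what happens to them.

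\textbf{Counting lines through a point.} On $F=F_{n,r}$ with $r\ge1$ (smooth, isomorphic to $\Ff_n$), the lines are the fibers of $\Ff_n\to\PP^1$ (images of $f$) together with the exceptional section when it is a line, which happens exactly when $(e\cdot(e+(n+r)f))=r=1$. When $r>1$ no section is a line, so through a general point $p$ there passes a unique line, namely the fiber through~$p$; blowing up~$p$ and blowing down this fiber yields case~(1). When $r=1$ the exceptional section \emph{is} a line, so a point~$p$ off the exceptional section lies on a unique line (its fiber), giving the first alternative of case~(2), while a point~$p$ on the exceptional section lies on \emph{two} lines (the fiber and the exceptional section), both of which must be contracted, giving the second alternative; the two blowdowns collapse the fiber direction and send the exceptional section to the cone vertex, in agreement with Proposition~\ref{prop:proj}, which predicts $F_{n+1,0}$. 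Case~(3) is $F_{0,1}$, the smooth quadric in $\PP^3$, through each point of which there pass the two rulings, hence two lines, both contracted, and the image is the plane. Case~(4) treats the cone $F_{n,0}$, $n>1$: through a smooth point there passes a unique generatrix, which is a line (degree $1$ since $(e+nf)\cdot f=1$), and blowing up~$p$ and contracting this generatrix yields the cone $F_{n-1,0}$. Finally, case~(5) is the Veronese surface $V\subset\PP^5$, which contains \emph{no} lines at all, since its hyperplane sections are images of conics and hence have degree~$2$; therefore $\pi_p$ contracts nothing, and is the pure blowup, with image $F_{1,1}$ by Proposition~\ref{prop:proj}.

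\textbf{Consistency with degrees and types.} In every case the claimed image is confirmed by Proposition~\ref{prop:proj}, which already determines the abstract type of $X'=\overline{\pi_p(F)}$; here I am only refining that statement by describing the birational map explicitly as blowup-then-blowdown. For the smooth cases the strict transform of a contracted line~$\ell$ has self-intersection $(\ell\cdot\ell)_F-1$ on $\widetilde F$, which equals $-1$ exactly when $\ell$ is a fiber (self-intersection~$0$ on $\Ff_n$) or a ruling line on the quadric, confirming these are $(-1)$-curves that can be blown down; the exceptional section, with self-intersection $-n$ on $\Ff_n$, becomes a curve of self-intersection $-n-1<0$ on $\widetilde F$ and is contracted to the singular vertex of the cone, consistent with the cone $F_{n+1,0}$ being singular when $n+1\ge2$.

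\emph{The main obstacle} is the case analysis for $F_{n,1}$ when $p$ lies on the exceptional section, where two curves are contracted and one must check that blowing down the fiber first leaves the strict transform of the exceptional section as a curve that contracts to a singular point, correctly producing the \emph{cone} $F_{n+1,0}$ rather than the smooth $\Ff_{n+1}$; this requires tracking self-intersection numbers carefully through the two successive contractions and matching them against the identification supplied by Proposition~\ref{prop:proj}.
\end{proof}
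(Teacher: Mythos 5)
The paper itself offers no proof of Proposition~\ref{prop:proj2}: Section~\ref{recap} explicitly recalls these facts ``without proofs,'' deferring to \cite{EisHar}. So there is no argument of the author's to compare against; what you have written is the standard argument, and it is essentially correct. Your organizing principle --- resolve $\pi_p$ by blowing up $p$, observe that an irreducible curve is contracted if and only if it is a line through $p$, and then enumerate the lines through $p$ on each surface of minimal degree --- is exactly the right one, and your self-intersection bookkeeping for the delicate subcase ($p$ on the exceptional section of $F_{n,1}$, where $\widetilde E$ has self-intersection $-n-1$ and contracts to the cone vertex) is correct.

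Three small points you should tighten. First, your enumeration of lines on $F_{n,r}$ tacitly uses the classification of irreducible curves on $\Ff_n$ (every irreducible curve other than $e$ and the fibers has class $ae+bf$ with $a\ge1$, $b\ge an$, hence degree $ar+b\ge n+r\ge2$); state this, since it is what rules out lines other than fibers and, when $r=1$, the exceptional section. Second, your justification that $V$ contains no lines is garbled: hyperplane sections of $V$ have degree $4$, not $2$; the correct statement is that the Veronese map doubles degrees, so every irreducible curve on $V$ has even degree at least $2$. Third, to upgrade ``the only contracted curves are the strict transforms of lines through $p$'' to the assertion that $\widetilde F\to X'$ \emph{is} the blowdown of exactly those curves (i.e., an isomorphism elsewhere), you need Zariski's main theorem together with the normality of $X'$, which the paper records for surfaces of minimal degree; a birational morphism from a smooth surface onto a normal surface is an isomorphism off the contracted curves. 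None of these is a gap in the idea, but all three deserve a sentence each.
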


\section{Rational curves with positive self-intersection and algebraic
germs}\label{sec:C.C>0}

\begin{proposition}\label{prop:dim|C|}
Suppose that $X$ is a smooth projective surface and $C\subset X$ is a
curve isomorphic to $\PP^1$. If $(C\cdot C)=m>0$, then the complete
linear system~$|C|$ has no basepoints, $\dim|C|=m+1$, the morphism
$\ph_{|C|}$ is a birational isomorphism between $X$ and
$\ph(X)\subset\PP^{m+1}$, and $\ph(X)$ is a surface in $\PP^{m+1}$ of
minimal degree~$m$.
\end{proposition}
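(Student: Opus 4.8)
The plan is to reduce everything to the single cohomological fact that $h^1(\Oo_X)=0$, which I regard as the heart of the matter, and then read off the geometry from the restriction exact sequence
\[
0 \to \Oo_X \to \Oo_X(C) \to \Oo_C(C) \to 0.
\]
Since $C \cong \PP^1$ and $\deg \Oo_C(C) = (C\cdot C) = m$, we have $\Oo_C(C) \cong \Oo_{\PP^1}(m)$; in particular $h^0(\Oo_C(C)) = m+1$ and $\Oo_C(C)$ is globally generated. Observe also that $C$ is itself a member of $|C|$, so the base locus of $|C|$ is automatically contained in~$C$.

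The main step, and the one I expect to be the principal obstacle, is to prove $h^1(\Oo_X)=0$. Suppose not, so that the Albanese map $a\colon X \to \mathrm{Alb}(X)$ is non-constant. Since $C \cong \PP^1$ admits no non-constant map to an abelian variety, $a$ contracts $C$ to a point. If $\dim a(X) = 2$, then for an ample divisor $H$ on $\mathrm{Alb}(X)$ the class $a^*H$ satisfies $(a^*H\cdot a^*H) > 0$ while $(C\cdot a^*H) = 0$ by the projection formula, and the Hodge index theorem then forces $(C\cdot C) \le 0$. If $\dim a(X) = 1$, then after Stein factorization $C$ lies in a fibre of a fibration $X \to B'$ over a curve $B'$ of genus $\ge 1$ (the base has positive genus because it maps finitely onto a curve generating a positive-dimensional abelian variety), so $(C\cdot f) = 0$ for the fibre class $f$; as $f$ is nef with $(f\cdot f)=0$ and $f\neq 0$, the Hodge index theorem again gives $(C\cdot C) \le 0$. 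Both cases contradict $(C\cdot C) = m > 0$, so $h^1(\Oo_X) = 0$.

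With $h^1(\Oo_X) = 0$ in hand, the long exact sequence attached to the displayed sequence reads
\[
0 \to H^0(\Oo_X) \to H^0(\Oo_X(C)) \to H^0(\Oo_C(C)) \to 0,
\]
so the restriction map is surjective and $h^0(\Oo_X(C)) = 1 + (m+1) = m+2$, i.e.\ $\dim|C| = m+1$. For base-point-freeness it suffices, since the base locus lies in $C$, to produce for each $x \in C$ a member of $|C|$ missing $x$: as $\Oo_C(C) = \Oo_{\PP^1}(m)$ is globally generated, choose a section of $\Oo_C(C)$ non-vanishing at $x$ and lift it, by the surjectivity just established, to a section of $\Oo_X(C)$ whose divisor avoids $x$. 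Hence $|C|$ is base-point-free and defines a morphism $\ph = \ph_{|C|}\colon X \to \PP^{m+1}$, whose image is non-degenerate because $|C|$ is complete.

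Finally, set $Y = \ph(X)$ and note $\ph^*\Oo_{\PP^{m+1}}(1) = \Oo_X(C)$, so $(\ph^*\Oo(1)\cdot\ph^*\Oo(1)) = (C\cdot C) = m > 0$; in particular $Y$ is two-dimensional and $\ph$ is generically finite, with $\deg(\ph)\cdot\deg Y = m$. On the other hand $Y$ is a non-degenerate surface in $\PP^{m+1}$, so by the minimal-degree bound \eqref{eq:deg>codim} (equivalently Proposition~\ref{prop:min.degree}) we have $\deg Y \ge \codim Y + 1 = m$. Combining $\deg(\ph)\cdot\deg Y = m$ with $\deg Y \ge m$ and $\deg(\ph) \ge 1$ forces $\deg(\ph) = 1$ and $\deg Y = m$. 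Thus $\ph$ is birational onto its image and $Y \subset \PP^{m+1}$ is a surface of minimal degree~$m$, as claimed.
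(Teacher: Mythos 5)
Your proof is correct, and it reaches the key vanishing $h^1(\Oo_X)=0$ by a genuinely different route than the paper. The paper uses deformation theory: from $\Nc_{X|C}\cong\Oo_{\PP^1}(m)$ it gets $h^1(\Nc_{X|C})=0$, hence a smooth $(m+1)$-dimensional Hilbert scheme, hence a positive-dimensional family of rational curves through a general point of $X$, and concludes that the Albanese map is constant. You instead argue by contradiction directly on the single curve $C$: a non-constant Albanese map must contract the rational curve $C$, and then the Hodge index theorem (against $a^*H$ in the generically finite case, against the fibre class after Stein factorization in the fibred case) forces $(C\cdot C)\le 0$. Your argument avoids Hilbert schemes and the slightly delicate assertion that a general deformation of $C$ is again a smooth rational curve, at the cost of invoking the signature of the intersection form, including its extension to a nef class $f$ with $(f\cdot f)=0$ (equivalently Zariski's lemma for fibres); it also works verbatim for singular rational curves with positive self-intersection, where the paper's normal-bundle computation would need modification. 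The remainder of your proof --- surjectivity of restriction in the sequence \eqref{eq:O_C(C)}, base-point-freeness via lifting sections of $\Oo_C(C)\cong\Oo_{\PP^1}(m)$ (you are right to note explicitly that the base locus sits inside $C$, a point the paper leaves implicit), and the degree count $\deg\ph\cdot\deg Y=m\ge\deg Y$ against the bound \eqref{eq:deg>codim} --- coincides with the paper's.
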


\begin{proof}
Since the normal bundle $\Nc_{X|C}$ is isomorphic to $\Oo_{\PP^1}(m)$,
where $m>0$, one has $h^0(\Nc_{X|C})=m+1$, $h^1(\Nc_{X|C})=0$, so the
Hilbert scheme of the curve $C\subset X$ is smooth and has
dimension~$m+1$ at the point corresponding to~$C$. A general curve
from this $(m+1)$-dimensional family is isomorphic to $\PP^1$; since
$m>0$, through a general point $p\in X$ there passes a
positive-dimensional family of rational curves. Therefore, the
Albanese mapping of $X$ is constant, whence $H^1(X,\Oo_X)=0$. Now it
follows from the exact sequence
\begin{equation}\label{eq:O_C(C)}
0\to\Oo_X\to \Oo_X(C)\xrightarrow\alpha\Oo_C(C)\to 0  
\end{equation}
that the homomorphism $\alpha_*\colon H^0(X,\Oo_X(C))\to
H^0(C,\Oo_C(C))$ is surjective. Since the linear system
$|\Oo_C(C)|=|\Oo_{\PP^1}(m)|$ has no basepoints, the linear system
$|C|$ has no basepoints either, and it follows from~\eqref{eq:O_C(C)}
and the vanishing of $H^1(\Oo_X)$ that $\dim |C|=m+1$. If
$\ph=\ph_{|C|}\colon X\to\PP^{m+1}$ and $Y=\ph(X)$, then $\dim Y=2$
and $\deg Y\cdot\deg \ph=(C\cdot C)=m$. Since $Y\subset \PP^{m+1}$ is
non-degenerate, one has $\deg Y\ge m$ (see~\eqref{eq:deg>codim}), whence $\deg Y=m$ and
$\deg\ph=1$, so $\ph$ is birational onto its image.
\end{proof}

\begin{corollary}\label{cor:dim|C|}
If $X$ is a projective surface with rational singularities and
$C\subset X_{\mathrm{sm}}$ is a curve that is isomorphic to $\PP^1$
and such that $(C\cdot C)>0$, then $h^1(\Oo_X)=0$ and
$h^0(\Oo_X(C))=(C\cdot C)+1$.
\end{corollary}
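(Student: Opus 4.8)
The plan is to deduce the corollary from Proposition~\ref{prop:dim|C|} by passing to a desingularization. I would begin by choosing a resolution $p\colon\bar X\to X$ that is an isomorphism over the smooth locus $X_{\mathrm{sm}}$ (this is possible because $X$ is singular only along a finite set, which is disjoint from $C$). Since $C\subset X_{\mathrm{sm}}$, the strict transform $\bar C\subset\bar X$ is carried isomorphically onto $C$ by $p$, so $\bar C\cong\PP^1$ and $(\bar C\cdot\bar C)=(C\cdot C)>0$. The argument of Proposition~\ref{prop:dim|C|} then applies verbatim to the smooth projective pair $(\bar C,\bar X)$ and, in particular, yields $H^1(\bar X,\Oo_{\bar X})=0$.

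The next step is to transport this vanishing down to $X$, and this is the only place where the hypothesis on the singularities of $X$ is genuinely used. The low-degree terms of the Leray spectral sequence for $p$ applied to $\Oo_{\bar X}$ give an exact sequence
\[
0\to H^1(X,p_*\Oo_{\bar X})\to H^1(\bar X,\Oo_{\bar X})\to H^0(X,R^1p_*\Oo_{\bar X}),
\]
and the defining properties of rational singularities, $p_*\Oo_{\bar X}=\Oo_X$ and $R^1p_*\Oo_{\bar X}=0$, collapse this to an isomorphism $H^1(X,\Oo_X)\cong H^1(\bar X,\Oo_{\bar X})=0$. This proves $h^1(\Oo_X)=0$.

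For the statement about $h^0(\Oo_X(C))$ I would argue directly on $X$. Because $C$ lies in the smooth locus, $\Oo_X(C)$ is invertible near $C$ and coincides with $\Oo_X$ away from $C$, so the canonical section cutting out $C$ produces a short exact sequence
\[
0\to\Oo_X\to\Oo_X(C)\to\Oo_C(C)\to 0
\]
whose cokernel is supported on $C$; here $\Oo_C(C)=\Nc_{X|C}\cong\Oo_{\PP^1}((C\cdot C))$, so that $h^0(\Oo_C(C))=(C\cdot C)+1$. Passing to cohomology and using $h^0(\Oo_X)=1$ (rational singularities are normal and $X$ is connected) together with the vanishing $h^1(\Oo_X)=0$ just established, the connecting homomorphism dies, and the value of $h^0(\Oo_X(C))$ is then read off from $h^0(\Oo_X)$ and $h^0(\Oo_C(C))$ by exactly the same bookkeeping as in the proof of Proposition~\ref{prop:dim|C|}. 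The only delicate points are the two uses of the singularity hypothesis—choosing the resolution to be an isomorphism over $X_{\mathrm{sm}}$, so that $\bar C$ is a faithful copy of $C$ with the same self-intersection, and invoking $p_*\Oo_{\bar X}=\Oo_X$, $R^1p_*\Oo_{\bar X}=0$ to identify $H^1$ upstairs and downstairs—after which the corollary is formal.
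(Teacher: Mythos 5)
Your argument is the same as the paper's: desingularize, run the Albanese/rational-curves argument of Proposition~\ref{prop:dim|C|} upstairs to get $H^1(\bar X,\Oo_{\bar X})=0$, use rationality of the singularities to identify $H^1(X,\Oo_X)$ with $H^1(\bar X,\Oo_{\bar X})$, and finish with the sequence~\eqref{eq:O_C(C)}; you merely spell out the Leray step and the choice of resolution that the paper leaves implicit. One remark: your bookkeeping (like the paper's) actually yields $h^0(\Oo_X(C))=h^0(\Oo_X)+h^0(\Oo_C(C))=(C\cdot C)+2$, so the value $(C\cdot C)+1$ in the statement is an off-by-one slip --- it should read $\dim|C|=(C\cdot C)+1$, which is the form in which the corollary is invoked in the proof of Proposition~\ref{prop:rigid}.
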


\begin{proof}
Let $\bar X$ be a desingularization of~$X$. Arguing as in the proof of
Proposition~\ref{prop:dim|C|}, we conclude that through a general
point of $\bar X$ there passes a positive-dimensional family of
rational curves, whence $H^1(\bar X,\Oo_{\bar X})=0$. Since the
singularities of the surface $X$ are rational, $H^1(\bar X,\Oo_{\bar
  X})\cong H^1(X,\Oo_X)$, so $H^1(X,\Oo_X)=0$. Now the result follows
from the exact sequence~\eqref{eq:O_C(C)}.
\end{proof}

Proposition~\ref{prop:dim|C|} implies the following characterisation
of algebraic neighborhoods of rational curves.

\begin{proposition}\label{prop:alg.neighborhoods}
If $(C,U)$ is an algebraic neighborhood of the curve $C\cong \PP^1$ and
if $(C\cdot C)=d>0$, then the germ of this neighborhood is
isomorphic to the germ of neighborhood of a smooth hyperplane section in a
surface of minimal degree~$d$ in~$\PP^{d+1}$.
\end{proposition}

\begin{proof}
Passing to desingularization, one may without loss of generality
assume that the neighborhood in question is a neighborhood of a curve
$C\subset X$, where $C\cong\PP^1$, $X$ is a smooth projective surface,
and $(C\cdot C)=d>0$. If $\ph\colon X\to Y$, where $Y$ is a surface of
minimal degree, is the birational morphism the existence of which is
asserted by Proposition~\ref{prop:dim|C|}, then $\ph$ is an
isomorphism in a neighborhood of $C$ and $\ph(C)$ is a hyperplane
section of $Y$, whence the result.
\end{proof}

Proposition~\ref{prop:alg.neighborhoods} may be regarded as a
generalization of Proposition~4.7 from~\cite{HK}, which asserts that
any algebraic germ of neighborhood of $\PP^1$ with
self-intersection~$1$ is isomorphic to the germ of neighborhood of a
line in~$\PP^2$.

Now we show that not only all germs of algebraic neighborhoods of
$\PP^1$ can be obtained from surfaces of minimal degree, but that
their isomorphisms are induced by isomorphisms of surfaces of minimal
degree.

\begin{proposition}\label{prop:rigid}
Suppose that $X_1\subset\PP^{N_1}$ and $X_2\subset\PP^{N_2}$ are
linearly normal projective surfaces with rational singularities,
$C_1\subset X_1$ and $C_2\subset X_2$ are their smooth hyperplane
sections, and that $C_1$ and $C_2$ are isomorphic to $\PP^1$.

If there exist analytic neighborhoods $U_j\supset C_j$, $j=1,2$, and a
holomorphic isomorphism $\ph\colon U_1\to U_2$ such that
$\ph(C_1)=C_2$, then $\ph$ extends to a projective isomorphism
$\Phi\colon X_1\to X_2$.
\end{proposition}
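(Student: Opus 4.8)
The plan is to reconstruct the projective embeddings $X_j\subset\PP^{N_j}$ from the analytic germs and then to show that $\ph$ is the restriction of the induced linear map. Write $d=(C_1\cdot C_1)=(C_2\cdot C_2)$; these numbers coincide because $\ph$ carries $C_1$ isomorphically onto $C_2$ and hence identifies the normal bundles. Since the notion of projective isomorphism is defined only for non-degenerate varieties, I may assume each $X_j$ is non-degenerate (otherwise one replaces $\PP^{N_j}$ by the linear span of $X_j$, which keeps $X_j$ linearly normal). As $C_j$ is a smooth hyperplane section of $X_j$, the sheaf $\Oo_{X_j}(C_j)$ equals $\Oo_{X_j}(1)$, and $C_j$ lies in the smooth locus of $X_j$; restricting to $C_j\cong\PP^1$ gives $\Nc_{X_j|C_j}=\Oo_{X_j}(1)|_{C_j}=\Oo_{\PP^1}(d)$. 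From the exact sequence~\eqref{eq:O_C(C)} and the vanishing $h^1(\Oo_{X_j})=0$ provided by Corollary~\ref{cor:dim|C|}, I obtain $h^0(\Oo_{X_j}(C_j))=1+h^0(\Oo_{\PP^1}(d))=d+2$; linear normality together with non-degeneracy then forces $N_j=d+1$ and identifies the embedding $X_j\hookrightarrow\PP^{d+1}$ with the one given by the complete linear system $|C_j|$. In other words $\PP^{N_j}=\PP(W_j^\vee)$ with $W_j=H^0(X_j,\Oo_{X_j}(C_j))$.

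Next I compare sections over $X_j$ with sections over a neighborhood; shrinking if necessary, I take each $U_j$ connected and contained in the smooth locus. Since $\ph$ is an isomorphism with $\ph^{-1}(C_2)=C_1$ as reduced divisors, one has $\ph^*\Oo_{U_2}(C_2)\cong\Oo_{U_1}(C_1)$, so $\ph$ induces an isomorphism $\ph^*\colon H^0(U_2,\Oo_{U_2}(C_2))\to H^0(U_1,\Oo_{U_1}(C_1))$. The crucial point, which is the main obstacle, is that the restriction map $H^0(X_j,\Oo_{X_j}(C_j))\to H^0(U_j,\Oo_{U_j}(C_j))$ is an isomorphism: an analytic section near $C_j$ need not a priori be algebraic. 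To see it, note that both spaces inject into the space $H^0(\widehat{X}_j,\widehat{\Oo}(C_j))$ of sections over the formal completion of $X_j$ along $C_j$, via Taylor expansion along $C_j$ (injectivity uses that a section vanishing to infinite order along $C_j$ vanishes on the connected $U_j$, respectively on the irreducible $X_j$), and that these two expansion maps are compatible with restriction. The conormal filtration expresses $H^0(\widehat{X}_j,\widehat{\Oo}(C_j))$ through the graded pieces $H^0(\PP^1,\Oo_{\PP^1}(d)\otimes\Oo_{\PP^1}(-kd))=H^0(\PP^1,\Oo_{\PP^1}(d(1-k)))$, $k\ge0$, which vanish for $k\ge2$ and have total dimension $(d+1)+1=d+2$. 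Hence $\dim H^0(\widehat{X}_j,\widehat{\Oo}(C_j))\le d+2=h^0(X_j,\Oo_{X_j}(C_j))$, so the injection $H^0(X_j,\Oo_{X_j}(C_j))\hookrightarrow H^0(\widehat{X}_j,\widehat{\Oo}(C_j))$ is an isomorphism. It is precisely the positivity $d>0$ that truncates the filtration to two nonzero pieces and makes formal, hence analytic, sections no more numerous than the global algebraic ones. A section $s\in H^0(U_j,\Oo_{U_j}(C_j))$ then has the same expansion as a unique global section, so $s$ extends, and the restriction map is an isomorphism.

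Finally I assemble the maps. Composing the two restriction isomorphisms with $\ph^*$ yields a linear isomorphism $\Psi\colon W_2=H^0(X_2,\Oo_{X_2}(C_2))\to W_1=H^0(X_1,\Oo_{X_1}(C_1))$, whose dual induces a linear isomorphism $\Phi\colon\PP(W_1^\vee)\to\PP(W_2^\vee)$, that is $\Phi\colon\PP^{d+1}\to\PP^{d+1}$. For $x\in U_1$ the functional $\mathrm{ev}_x\circ\Psi$ on $W_2$ agrees, up to a nonzero scalar depending on the chosen bundle identification, with $\mathrm{ev}_{\ph(x)}$, because $\Psi(t)$ restricts on $U_1$ to $\ph^*(t|_{U_2})$, which evaluates at $x$ to the value of $t$ at $\ph(x)$. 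In terms of the embeddings $\iota_j\colon X_j\hookrightarrow\PP(W_j^\vee)$ this says $\Phi\circ\iota_1=\iota_2\circ\ph$ on $U_1$, so $\Phi$ restricts to $\ph$ on the germ. Since $\iota_1(U_1)$ is a nonempty open, hence Zariski-dense, subset of the irreducible surface $X_1$ and $\Phi$ is a linear isomorphism carrying it into $X_2$, one gets $\Phi(X_1)=X_2$. Thus $\Phi$ is the desired projective isomorphism extending $\ph$.
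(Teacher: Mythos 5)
Your proof is correct, and its overall skeleton is the same as the paper's: identify the embeddings $X_j\subset\PP^{d+1}$ with those given by the complete linear systems $|C_j|$ (using linear normality and $h^0(\Oo_{X_j}(C_j))=d+2$, which follows from Corollary~\ref{cor:dim|C|}), extend the germs along $C_1$ of the pulled-back sections to global sections of $\Oo_{X_1}(C_1)$, and read off the linear map $\Phi$ from the resulting change of basis. Where you genuinely diverge is in the justification of the extension step. The paper isolates it as Lemma~\ref{lemma:extn2}: a germ $g$ with a pole of order $\le r$ along the curve induces a section of $\Nc_{X|rC}$, which lifts to a global $f\in H^0(X,\Oo_X(rC))$ because $H^1(X,\Oo_X)=0$, and then Lemma~\ref{lemma:const} forces $f-g$ to be a constant. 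You instead bound the space of formal (hence of analytic) sections of $\Oo(C_j)$ along $C_j$ by $d+2$ via the conormal filtration, whose graded pieces $H^0(\PP^1,\Oo_{\PP^1}(d(1-k)))$ vanish for $k\ge 2$ precisely because $d>0$, and conclude by a dimension count that restriction from $H^0(X_j,\Oo_{X_j}(C_j))$ to sections near $C_j$ is bijective. Your variant is self-contained --- the same filtration applied to $\Oo_{X_j}$ reproves Lemma~\ref{lemma:const} in this setting --- and it makes the role of the positivity of $(C\cdot C)$ completely explicit; the paper's lemma is more general, applying to an arbitrary ample irreducible curve and arbitrary pole order $r$, and is reused with $r=2$ in the proof of Proposition~\ref{prop:constr2}, where a higher-genus analogue of your count would require extra vanishing input. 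Your final steps (the evaluation-functional computation showing $\Phi\circ\iota_1=\iota_2\circ\ph$ on $U_1$, and the passage from $\Phi(\iota_1(U_1))\subset X_2$ to $\Phi(X_1)=X_2$ by Zariski density) are carried out in more detail than the paper's one-line conclusion, and are correct.
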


To prove this proposition we need two lemmas.
The first of them is well known.

\begin{lemma}\label{lemma:const}
If $X$ is a projective surface with isolated singularities
and $C\subset X_{\mathrm{sm}}$ is an ample irreducible curve, then the
ring of germs of holomorphic functions along $C$ coincides with~\C.
\end{lemma}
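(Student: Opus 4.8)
The plan is to reduce the statement to the elementary fact that a line bundle of negative degree on an integral projective curve has no nonzero global sections. Let $f$ be a germ of holomorphic function along~$C$, represented by a holomorphic function on a neighborhood $U\supset C$; since $X_{\mathrm{sm}}$ is open and contains~$C$, I may shrink $U$ so that $U\subset X_{\mathrm{sm}}$ is a smooth surface. As $C$ is irreducible, it is compact and connected, so the restriction $f|_C$ is a holomorphic function on a compact connected complex space and is therefore constant by the maximum principle. Subtracting this constant, it suffices to treat a function $f$ that vanishes on~$C$ and to show that such an $f$ vanishes identically in a neighborhood of~$C$.

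Let $\Ic\subset\Oo_U$ be the ideal sheaf of~$C$. Because $C$ is a Cartier divisor in the smooth surface~$U$, the sheaf $\Ic$ is invertible, $\Ic^k=\Oo_U(-kC)$, and the graded pieces are line bundles on~$C$:
\[
\Ic^k/\Ic^{k+1}\cong\Oo_C(-kC),\qquad \deg\Oo_C(-kC)=-k\,(C\cdot C).
\]
Since $C$ is ample one has $(C\cdot C)>0$, so each of these line bundles has \emph{negative} degree, and hence $H^0(C,\Oo_C(-kC))=0$ for all $k\ge1$ because $C$ is integral. I would then prove by induction on~$k$ that $f\in\Gamma(U,\Ic^k)$ for every~$k$. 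The case $k=1$ is the vanishing of $f$ on~$C$. For the inductive step, the image of $f$ under the quotient map $\Ic^k\to\Ic^k/\Ic^{k+1}$ is a section supported on~$C$, i.e.\ an element of $H^0(C,\Oo_C(-kC))=0$; thus the image is zero and $f\in\Gamma(U,\Ic^{k+1})$. Note that this argument runs on the fixed neighborhood~$U$ with no shrinking.

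It remains to pass from infinite-order vanishing to genuine vanishing. Fix $p\in C$. The local ring $\Oo_{U,p}$ of convergent power series is Noetherian and local, and $\Ic_p$ is contained in its maximal ideal, so Krull's intersection theorem gives $\bigcap_{k\ge1}\Ic_p^{\,k}=0$. Since $f_p\in\Ic_p^{\,k}$ for all~$k$, we conclude $f_p=0$; as $p\in C$ was arbitrary, $f$ vanishes on a neighborhood of~$C$, which is what we wanted. The only point that requires genuine care is the identification $\Ic^k/\Ic^{k+1}\cong\Oo_C(-kC)$ together with the vanishing of its sections when $C$ may be singular: this is precisely where I use that $C$ is integral and lies in the smooth locus, so that $\Ic$ is a true line bundle and the degree--section dichotomy for line bundles on integral projective curves applies. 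Everything else is formal.
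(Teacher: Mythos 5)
Your argument is correct, but it takes a genuinely different route from the paper's. The paper offers two proofs: it first quotes the computation $H^0(\hat X,\Oo_{\hat X})=\C$ for the formal completion of $X$ along an ample curve, and then sketches an elementary geometric alternative --- embed $X$ so that $rC$ is a hyperplane section, sweep out a nonempty open subset of the given neighborhood $U$ by nearby compact hyperplane sections $H_\alpha\subset U$, observe that $f$ is constant on each $H_\alpha$ and that any two of them meet, and conclude by the identity theorem on the connected set~$U$. What you do instead is essentially a self-contained version of the formal-functions computation: after normalizing $f|_C$ to zero, you climb the $\Ic$-adic filtration using $H^0(C,\Oo_C(-kC))=0$ for $k\ge1$ (negativity of the powers of the conormal bundle, which is where irreducibility/integrality of $C$ enters), and then pass from infinite-order vanishing along $C$ to vanishing of the germ via Krull intersection in the Noetherian local rings $\Oo_{U,p}$. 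All the steps check out: $\Ic$ is invertible because $C$ sits in the smooth locus, the quotient $\Ic^k/\Ic^{k+1}$ is supported on the compact curve $C$ so its global sections are computed there, and left exactness of global sections drives the induction without shrinking $U$. The trade-off: your proof uses only $(C\cdot C)>0$ and compactness of $C$ rather than the full strength of ampleness (no projective embedding of $X$ or deformation of hyperplane sections is needed), at the cost of some coherent-sheaf bookkeeping; the paper's sweeping argument is softer and more geometric but relies on producing the family $H_\alpha$ inside the given neighborhood.
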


\begin{proof}[Sketch of proof]
This follows immediately from the fact that $H^0(\hat X,\Oo_{\hat
  X})=\C$, where $\hat X$ is the formal completion of $X$ along~$C$
(see \cite[Chapter V, Proposition 1.1 and Corollary 2.3]{H-ample}).

Here is a more elementary argument. Since $C$ is an ample divisor in
$X$, there exists and embedding of $X$ in $\PP^N$ such that $rC$, for
some $r>0$, is a hyperplane section of $X$. Suppose that $U\supset C$,
$U\subset X$ is a connected neighborhood of~$C$. There exists a family
of hyperplane sections $\{H_\alpha\}$, close to the one corresponding
to $rC$, such that $H_\alpha\subset U$ for each $\alpha$ and
$\bigcup_\alpha H_\alpha$ contains a non-empty open subset $V\subset
U$.  If $f$ is a holomorphic function on $U$, then $f$ is constant on
each $H_\alpha$; since $H_\alpha\cap H_{\alpha'}\ne\varnothing$ for
each $\alpha$ and $\alpha'$, $f$ is constant on the union of all the
$H_\alpha$'s, hence on $V$, hence on~$U$.
\end{proof}

\begin{lemma}\label{lemma:extn2}
Suppose that $X$ is a projective surface such that $H^1(X,\Oo_X)=0$,
$D\subset X_{\mathrm{sm}}$ is an ample irreducible projective curve, and $r$
is a positive integer. Then any germ of meromorphic function
along~$C$, with possibly a pole of order~$\le r$ along~$C$ and no
other poles, is induced by a rational function on~$X$ with possibly a
pole of order~$\le r$ along~$C$ and no other poles.
\end{lemma}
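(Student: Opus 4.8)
```latex
The plan is to prove this by induction on the pole order $r$, reducing
the meromorphic case to the holomorphic case treated by comparison with
rational functions on~$X$. Let $f$ be a germ of meromorphic function
along~$C$ with a pole of order~$\le r$ along~$C$ and no other poles.
The key idea is that near~$C$ such a germ is a section of the line
bundle $\Oo_X(rC)$ (away from the singular points of $X$, which lie
off $C$ since $C\subset X_{\mathrm{sm}}$), so I would like to show that
every germ of section of $\Oo_X(rC)$ along~$C$ extends to a global
section, i.e.\ that the restriction map
$H^0(X,\Oo_X(rC))\to \M(X,C)_{\le r}$ hits every germ with pole order
$\le r$. The natural comparison is with the exact sequence obtained by
the same mechanism as~\eqref{eq:O_C(C)}, now twisted by~$rC$.

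First I would set up the algebraic side. Since $C$ is ample on~$X$ and
$H^1(X,\Oo_X)=0$, I would use the exact sequences
\begin{equation*}
0\to \Oo_X((k-1)C)\to \Oo_X(kC)\to \Oo_C(kC)\to 0
\end{equation*}
together with Serre vanishing to control $H^1(X,\Oo_X(kC))$. Because
$C\cong\PP^1$ and $(C\cdot C)=d>0$, the restriction $\Oo_C(kC)\cong
\Oo_{\PP^1}(kd)$ has vanishing $H^1$ for all $k\ge 0$, and starting
from $H^1(X,\Oo_X)=0$ an induction along these sequences gives
$H^1(X,\Oo_X(kC))=0$ for every $k\ge 0$. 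This is what forces the
global restriction maps $H^0(X,\Oo_X(kC))\to H^0(C,\Oo_C(kC))$ to be
surjective, so that a global rational function with a prescribed
principal part along~$C$ (to the requested order) can be manufactured
step by step.

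Next comes the analytic-to-algebraic comparison, which is the heart of
the argument. Given the germ $f$ with pole of order~$\le r$, its
leading coefficient along~$C$ defines an element of $H^0(C,\Oo_C(rC))$;
by the surjectivity just established I can find a rational function
$g_r$ on~$X$, regular away from~$C$ with pole order $\le r$ along~$C$,
whose restriction to~$C$ realizes the same leading term. Then
$f-g_r$ is a germ of meromorphic function along~$C$ with pole order
$\le r-1$, so by the inductive hypothesis it is induced by a rational
function on~$X$ with the asserted pole behavior; adding back $g_r$
finishes the step. The base of the induction, $r=0$, is precisely the
statement that a germ of holomorphic function along~$C$ is a constant,
which is Lemma~\ref{lemma:const} (applicable because $C$ is ample and
$X$ has isolated, hence here off-$C$, singularities); constants are of
course induced by rational functions on~$X$.

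The main obstacle I anticipate is making the notion ``leading
coefficient along~$C$'' precise and compatible on the analytic and
algebraic sides, i.e.\ identifying a germ with a pole of order exactly
$k$ along the smooth ample curve~$C$ with a germ of section of
$\Oo_C(kC)$ via the residue/symbol map, and checking that matching this
symbol by a global rational function genuinely lowers the pole order of
the difference. This requires that $C$ be a Cartier divisor in a
neighborhood (guaranteed since $C\subset X_{\mathrm{sm}}$) and that the
comparison between the formal/analytic completion along~$C$ and the
algebraic sections be faithful; the ampleness of~$C$ and the vanishing
$H^1(X,\Oo_X(kC))=0$ are exactly the inputs that make this faithful,
again via the formal functions theorem or the elementary
hyperplane-section argument used in Lemma~\ref{lemma:const}. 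Once the
symbol map is set up cleanly, the induction is routine.
```
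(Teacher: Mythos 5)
Your argument is correct in substance but follows a genuinely different route from the paper's. You filter by pole order: at each step you extract the leading coefficient (symbol) of the germ as a section of $\Oo_C(kC)$, match it by a global section of $\Oo_X(kC)$ using the surjectivity of $H^0(X,\Oo_X(kC))\to H^0(C,\Oo_C(kC))$, and descend to pole order $k-1$, bottoming out in Lemma~\ref{lemma:const}. The paper instead uses a single exact sequence
$0\to\Oo_X\to\Oo_X(rC)\to\Nc_{X|rC}\to 0$,
where $\Nc_{X|rC}$ is supported on the $r$-th infinitesimal neighborhood of~$C$: the germ induces a global section of $\Nc_{X|rC}$ (that sheaf lives entirely inside any neighborhood of $C$), which lifts to $H^0(X,\Oo_X(rC))$ using only $H^1(X,\Oo_X)=0$; the difference is then holomorphic along $C$, hence constant by Lemma~\ref{lemma:const}. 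The trade-off is real: the paper's one-step argument needs no vanishing beyond $H^1(X,\Oo_X)=0$ and so works for any ample irreducible $C\subset X_{\mathrm{sm}}$ as stated, whereas your induction additionally requires $H^1(C,\Oo_C(kC))=0$ for $1\le k\le r$, which you obtain by assuming $C\cong\PP^1$ with $(C\cdot C)>0$ --- a hypothesis not present in the lemma (for an ample curve of genus $g\ge 1$ with small self-intersection this vanishing can fail). Since every application in the paper has $C\cong\PP^1$, this costs nothing in practice, but you should either add the hypothesis or note the restriction. Two small points: the appeal to Serre vanishing is a red herring (it gives vanishing only for large $k$, and your inductive argument via the sequences $0\to\Oo_X((k-1)C)\to\Oo_X(kC)\to\Oo_C(kC)\to 0$ already does the job for all $k$); and the ``main obstacle'' you flag --- defining the symbol map --- is routine once $C$ is Cartier near itself, exactly as you say, so the plan does close up into a complete proof.
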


\begin{proof}
Let $\Ic_C\subset\Oo_X$ be the ideal sheaf of $C$; put $\Ic_{rC}=\Ic_C^m$,
$\Oo_{rC}=\Oo_X/\Ic_C^m$, and
\begin{equation*}
  \Nc_{X|rC} = \underline{\mathrm{Hom}}_{\Oo_{rC}}(\Ic_{rC}/\Ic_{rC}^2,\Oo_{rC})=
\underline{\mathrm{Hom}}_{\Oo_X}(\Ic_{rC},\Oo_{rC}).  
\end{equation*}
Identifying $\Oo_X(rC)$ with the sheaf of meromorphic functions having
at worst a pole of order~$\le r$ along~$C$, one has the exact sequence
\begin{equation}\label{eq:O_C(rC)}
0\to\Oo_X\to\Oo_X(rC)\xrightarrow \alpha   \Nc_{X|rC}\to 0,
\end{equation}
in which the homomorphism $\alpha$ has the form
\begin{equation*}
  g\mapsto (s\mapsto gs \bmod \Ic_{rC}),
\end{equation*}
where $g$ is a meromorphic function on an open subset of $V\subset X$,
with at worst a pole of order~$\le r$ along~$C$ and $s$ is a section
of $\Ic_{rC}$ over~$V$. In particular, if $U\supset C$ is a
neighborhood, then any section of $g\in H^0(U,\Oo_X(rC))$ induces a
global section of $\Nc_{X|rC}$. Since $H^1(X,\Oo_X)=0$, the homomorphism $\alpha$
from~\eqref{eq:O_C(rC)} induces a surjection on global sections, so
there exists a section $f\in H^0(X,\Oo_X(rC))$ such that $f$ and $g$
induce the same global section of $\Nc_{X|rC}$. Hence, the
meromorphic function $(f|_U)-g$ has no pole in $U$, so by virtue of
Lemma~\ref{lemma:const} this function is equal to a constant $c$ on a
(possibly smaller) neighborhood of $C$. Thus, the germ of $f-c$
along~$C$ equals that of~$g$.
\end{proof}

\begin{proof}[Proof of Proposition~\ref{prop:rigid}]
It follows from the hypothesis that $(C_1\cdot C_1)=(C_2\cdot C_2)$. If we
denote these intersection indices by~$m$, then
Corollary~\ref{cor:dim|C|} implies that $\dim
|C_1|=\dim |C_2|=m+1$. Let $f_0,\dots,f_{m+1}$ be a basis of
$H^0(\Oo_{X_1}(C_1))$ (i.e., the basis of space of meromorphic
functions on $X_1$ with at worst a simple pole along $C_1$), and
similarly let $(g_0,\dots,g_{m+1})$ be a basis of  $H^0(\Oo_{X_2}(C_2))$.
Embed $X_1$ (resp.~$X_2$) into $\PP^{m+1}$ with the linear
system $|C_1|$ (resp.~$|C_2|$), that is, with the mappings
\begin{equation*}
x\mapsto (f_0(x):\dots:f_{n+1}(x))\quad\text{and}\quad  
y\mapsto (g_0(y):\dots:g_{n+1}(y)).
\end{equation*}
If $\gamma_i$, $0\le i\le m+1$, is the germ along $C_1$ of the
meromorphic function $g_i\circ \ph$, then by virtue of
Lemma~\ref{lemma:extn2}, which we apply in the case $r=1$, each
$\gamma_i$ is the germ along $C_1$ of a meromorphic function $h_i\in
H^0(\Oo_{X_1}(C_1))$. If $h_i=\sum a_{ij}f_j$, then the matrix
$\|a_{ij}\|$ defines a linear automorphism $\Phi\colon\PP^{m+1}\to
\PP^{m+1}$ such that its restriction to a neighborhood of $C_1$
coincides with $\Phi$. Hence, $\Phi$ maps $X_1$ isomorphically
onto~$X_2$.
\end{proof}

\begin{corollary}
Suppose that $F\subset\PP^{m}$
\textup(resp.\ $F'\subset\PP^{m'}$\textup) is a surface of minimal
degree and $C$ \textup(resp.\ $C'$\textup) is its smooth hyperplane
section. Then the germs of neighborhoods of $C$ in $F$ and of $C'$
in~$F'$ are isomorphic if and only if there exists a linear
isomorphism $\Phi\colon \PP^{m}\to\PP^{m'}$ such that $\Phi(F)=F'$ and
$\Phi(C)=C'$.
\end{corollary}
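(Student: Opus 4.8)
The plan is to deduce this corollary directly from Proposition~\ref{prop:rigid} together with the basic facts about surfaces of minimal degree recalled in Section~\ref{recap}. The statement is an ``if and only if'', and the ``if'' direction is essentially immediate, so the substantive content is the ``only if'' direction, which I would obtain by checking that surfaces of minimal degree satisfy the hypotheses of Proposition~\ref{prop:rigid} and then applying that proposition verbatim.

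First I would dispose of the easy direction. Suppose a linear isomorphism $\Phi\colon\PP^m\to\PP^{m'}$ with $\Phi(F)=F'$ and $\Phi(C)=C'$ exists. Then $m=m'$, and $\Phi$ restricts to a biholomorphism between suitable neighborhoods of $C$ in $F$ and of $C'$ in $F'$ carrying $C$ to $C'$; this is exactly an isomorphism of germs of neighborhoods, so the germs are isomorphic.

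For the ``only if'' direction I would apply Proposition~\ref{prop:rigid} with $X_1=F$ and $X_2=F'$. The point is to verify its three hypotheses. The surfaces $F,F'$ have rational singularities: by Proposition~\ref{prop:min.degree} each is either a smooth surface (the $F_{n,r}$ with $r>0$, or the Veronese $V$), or a cone $F_{n,0}$, and it is recorded in Section~\ref{recap} that the $F_{n,0}$ are normal with rational singularities. They are linearly normal: each is embedded by a \emph{complete} linear system, namely $F_{n,r}$ by $|e+(n+r)f|$ and $V$ by the complete system of conics, so the restriction map from $H^0(\PP^m,\Oo(1))$ is surjective by construction. Finally, $C$ and $C'$ are smooth hyperplane sections isomorphic to $\PP^1$ by hypothesis, and since they have positive self-intersection (a hyperplane section of a surface of minimal degree $\ge1$ is ample) they lie in the smooth locus by assumption $C\subset F_{\mathrm{sm}}$. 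With all hypotheses met, Proposition~\ref{prop:rigid} produces a projective isomorphism $\Phi\colon F\to F'$ extending the given germ isomorphism $\ph$; in particular $\Phi$ is induced by a linear isomorphism of the ambient projective spaces, and since $\Phi$ extends $\ph$ we have $\Phi(C)=\ph(C)=C'$, as required.

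The main obstacle is simply making sure the hypotheses of Proposition~\ref{prop:rigid} genuinely hold for every surface of minimal degree, including the degenerate cases. The one point to handle with care is that the proposition requires $C\subset F_{\mathrm{sm}}$, i.e.\ that the smooth hyperplane section avoid the cone vertex when $F=F_{n,0}$; but a general hyperplane section through the vertex would be singular there, so a \emph{smooth} hyperplane section automatically misses the vertex, and the hypothesis is met. Everything else is bookkeeping against the recap, and no new idea beyond Proposition~\ref{prop:rigid} is needed.
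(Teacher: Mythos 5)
Your proof is correct and follows essentially the same route as the paper, which simply cites Proposition~\ref{prop:rigid} together with the fact that surfaces of minimal degree have rational singularities; your write-up just spells out the hypothesis-checking in more detail. One small slip: $C\cong\PP^1$ is not literally ``by hypothesis'' but follows because a smooth hyperplane section of a surface of minimal degree is a rational normal curve.
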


\begin{proof}
Immediate from Proposition~\ref{prop:rigid} if one takes into account
that all surfaces of minimal degree have rational singularities.  
\end{proof}

\begin{note}
We see that any algebraic neighborhood of~$C\cong\PP^1$ has one more
discrete invariant, besides the self-intersection $d=(C\cdot C)$: if
this neighborhood is isomorphic to the germ of neighborhood of the
minimal surface $F_{n,r}$, where $d=n+2r$, this is the integer $n\ge
0$ (and if the surface is not $F_{n,r}$ but the Veronese surface
$V\subset \PP^5$, we assign to our neighborhood the tag~$V$
instead). It should be noted however that, as a rule, the pair $(d,n)$
does \emph{not} determine the germ of neighborhood up to
isomorphism. Indeed, the dimension of the group of automorphisms of the
surface $\Ff_n$ is $n+5$ if $n>0$ and~$6$ if $n=0$. In most cases
this is less that the dimension of the space of hyperplanes in
$\PP^{n+2r+1}$, in which $F_{n,r}$ is embedded.  On the other hand,
linear automorphisms of $F_{n,0}\subset \PP^{n+1}$ act transitively on
the set of smooth hyperplane sections of $F_{n,0}$, and ditto
for~$V\subset\PP^5$. Thus, tags $(n,0)$ or~$V$ do determine an
algebraic germ of neighborhood of~$\PP^1$ up to isomorphism.
\end{note}

\section{Blowups and blowdowns}\label{sec:blowdowns}\label{proof}

Suppose that a smooth projective curve $C$ lies on a smooth
complex analytic surface~$S$ and that $p\in C$. Let $\sigma\colon \tilde
S\to S$ be the blowup of $S$ at $p$, and let $\tilde C\subset \tilde S$ be the
strict transform of $C$. It is clear that the germ of neighborhood
$\tilde C\subset \tilde S$ depends only on the germ of neighborhood
$C\subset S$ and on the point $p\in C$.

\begin{definition}
In the above setting, the germ of neighborhood $(\tilde C,\tilde U)$ will
be called \emph{the blowup} of the germ $(C,U)$ at the point~$p$.
\end{definition}

For future reference we state the following obvious properties of
blowups of neighborhoods.

\begin{proposition}\label{blowup}
Suppose that the germ of neighborhoods $(\tilde C,\tilde U)$ is a blowup
of~$(C,U)$. Then
\begin{enumerate}
\item $\tilde C$ is isomorphic to $C$;  
\item $(\tilde C\cdot \tilde C)=(C\cdot C)-1$;
\item if\label{item4} $(C,U)$ is algebraic then $(\tilde C,\tilde U)$
  is algebraic.
\end{enumerate}
\end{proposition}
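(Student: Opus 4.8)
The plan is to verify the three assertions separately, noting that each is either local near~$p$ or a formal consequence of the construction, so that the proof reduces to standard properties of the blowup of a smooth point on a smooth surface. Throughout I will use that $\sigma$ is an isomorphism over $S\setminus\{p\}$.

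For part~(1), I would work in local analytic coordinates. Since $C$ is smooth and $p\in C$, choose coordinates $(x,y)$ on a neighborhood of $p$ in~$S$ with $C=\{y=0\}$ and $p=(0,0)$. As $\sigma$ is an isomorphism over $S\setminus\{p\}$, it maps $\tilde C\setminus(\tilde C\cap E)$ isomorphically onto $C\setminus\{p\}$; and in the chart of~$\tilde S$ where $\sigma$ has the form $(x,t)\mapsto(x,xt)$, the strict transform $\tilde C$ is the locus $\{t=0\}$, which is parametrized isomorphically by~$x$. Hence $\sigma|_{\tilde C}\colon\tilde C\to C$ is an isomorphism, which proves~(1). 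For part~(2), since $C$ passes through~$p$ with multiplicity one, the total transform satisfies $\sigma^*C=\tilde C+E$, where $E$ is the exceptional curve. Using the projection formula $(\sigma^*C)^2=(C\cdot C)$, the vanishing $(\sigma^*C\cdot E)=0$ (as $\sigma_*E=0$), and $(E\cdot E)=-1$, I would expand
\[
(\tilde C\cdot\tilde C)=(\sigma^*C-E)^2=(\sigma^*C)^2-2(\sigma^*C\cdot E)+(E\cdot E)=(C\cdot C)-1.
\]
These intersection numbers are well defined because $C$ and $\tilde C$ are compact, and the formulas hold verbatim in the analytic category since the computation is local near~$p$.

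For part~(3), suppose $(C,U)$ is algebraic, so there is an isomorphism of germs identifying $(C,U)$ with the germ of a curve $C'\cong C$ in a smooth algebraic surface~$X$; let $p'\in C'$ be the image of~$p$. Blowing up $X$ at~$p'$ produces a smooth algebraic surface~$\tilde X$, and the germ of the strict transform $\tilde C'\subset\tilde X$ is algebraic. Since, as observed just before the Definition, the blowup of a germ of neighborhood depends only on that germ and on the chosen point, the isomorphism of germs carrying $(C,U)$ to the germ of $C'\subset X$ and $p$ to~$p'$ induces an isomorphism of the blown-up germs; hence $(\tilde C,\tilde U)$ is isomorphic to the algebraic germ $\tilde C'\subset\tilde X$ and is therefore algebraic.

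I do not expect any genuine obstacle here, as each part is a standard fact. The only point requiring a moment of care is the compatibility of the blowup construction with analytic isomorphisms used in part~(3); but this is precisely the content of the observation preceding the Definition, so that step reduces to invoking it.
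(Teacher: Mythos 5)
Your proof is correct. The paper states this proposition without any proof at all (it is introduced as a list of ``obvious properties''), and the verifications you supply --- the local-coordinate check that $\sigma|_{\tilde C}$ is an isomorphism, the standard computation $(\tilde C\cdot\tilde C)=(\sigma^*C-E)^2=(C\cdot C)-1$, and the use of the remark that the blown-up germ depends only on the germ $(C,U)$ and the point $p$ to transport algebraicity --- are exactly the standard arguments the author is implicitly relying on.
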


For algebraic neighborhoods of $\PP^1$ the assertion~(\ref{item4}) of
Proposition~\ref{blowup} can be made more explicit.

\begin{proposition}\label{prop:projection}
If an algebraic germ $(C,U)$ is isomorphic to the germ of
neighborhood of a hyperplane section of a surface of minimal degree
$F\subset\PP^N$, then the blowup of this germ at a point $p\in C$ is
isomorphic to the germ of neighborhood of a hyperplane section of the
surface $F'\subset\PP^{N-1}$ that is obtained from $F$ by projection
from the point~$p$.
\end{proposition}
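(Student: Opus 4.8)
The plan is to reduce the statement about germs of neighborhoods to the explicit geometry of the projection $\pi_p\colon F\dashrightarrow F'\subset\PP^{N-1}$ described in Propositions~\ref{prop:proj} and~\ref{prop:proj2}. The point is that projection from a smooth point $p\in F$ is, up to resolving the indeterminacy, nothing but the blowup of $F$ at~$p$ followed by some blowdowns, and those blowdowns happen \emph{away} from the strict transform of a general hyperplane section through~$p$. So first I would set up the commutative picture: let $\sigma\colon\tilde F\to F$ be the blowup of $F$ at~$p$, so that $\pi_p\circ\sigma\colon\tilde F\to F'$ is a morphism. Let $C\subset F$ be a smooth hyperplane section passing through~$p$ (a general hyperplane through~$p$ cuts out such a curve, since $p$ is a smooth point of~$F$), and let $\tilde C\subset\tilde F$ be its strict transform. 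By definition the blowup of the germ $(C,U)$ at~$p$ is the germ $(\tilde C,\tilde U)$.

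Next I would identify $C'=\pi_p(C)$ as a hyperplane section of~$F'$ and show that $\pi_p\circ\sigma$ maps a neighborhood of $\tilde C$ isomorphically onto a neighborhood of $C'$ in~$F'$. The hyperplane in $\PP^N$ cutting out $C$ corresponds, under projection from~$p$, to a hyperplane in $\PP^{N-1}$, and since $C$ passes through the center~$p$, its image $C'=\pi_p(C)$ is precisely the hyperplane section of $F'$ cut out by that hyperplane; one checks $(\tilde C\cdot\tilde C)=(C\cdot C)-1=N-1$, which matches the self-intersection of a hyperplane section on the degree-$(N-1)$ surface~$F'$, consistent with Proposition~\ref{blowup}(2). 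The crucial observation is that in every case listed in Proposition~\ref{prop:proj2}, the curves that $\pi_p\circ\sigma$ contracts are strict transforms of the lines (or the fiber, or the exceptional section) through~$p$, and a \emph{general} such $C$ through~$p$ meets each of these contracted curves only at points lying off a suitable neighborhood of~$\tilde C$. Hence $\pi_p\circ\sigma$ is an isomorphism on some open set containing $\tilde C$, carrying $\tilde C$ to~$C'$; this gives the desired isomorphism of germs.

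The main obstacle, and the step requiring care, is the case analysis guaranteeing that the contracted curves do not interfere with a neighborhood of~$\tilde C$. When $p$ lies on the exceptional section (cases (2) and (4) of Proposition~\ref{prop:proj2}) the contracted locus is more complicated, and one must confirm that a general hyperplane section~$C$ through~$p$ still has strict transform disjoint from the full contracted locus outside a neighborhood of~$\tilde C$, so that the germ is unaffected. I would handle this by noting that the contracted curves are finitely many rational curves, each meeting $\tilde C$ in at most finitely many points, and that shrinking the neighborhood $\tilde U$ removes these intersection points; since we are working with germs, passing to an arbitrarily small $\tilde U$ is free. Finally, since $F'$ is again a surface of minimal degree by Proposition~\ref{prop:proj}, and $C'$ is a smooth hyperplane section of it, the germ $(\tilde C,\tilde U)\cong(C',\tilde U')$ is exactly the germ of a hyperplane section of~$F'$, which is the assertion.
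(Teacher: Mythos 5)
Your overall strategy is exactly the paper's: the paper's entire proof is ``Immediate from Proposition~\ref{prop:proj2}'', i.e.\ the projection $\pi_p$ factors as the blowup $\sigma$ at $p$ followed by contractions of curves that do not interfere with a neighborhood of $\tilde C$, so the germ of $\tilde C$ in $\tilde F$ is carried isomorphically onto the germ of the hyperplane section $C'=\pi_p(C)$ of $F'$. So you are expanding the intended one-line argument, which is fine.

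However, the one step you flag as ``requiring care'' is handled incorrectly. You propose to deal with possible intersections of the contracted curves with $\tilde C$ by ``shrinking the neighborhood $\tilde U$'' to ``remove these intersection points''. This cannot work: every neighborhood of $\tilde C$ contains all of $\tilde C$, so if a contracted curve $Z$ met $\tilde C$ at a point $x$, then every $\tilde U\supset\tilde C$ would contain $x$ together with a germ of the branch of $Z$ through $x$, and $\pi_p\circ\sigma$ would fail to be injective (hence fail to be an isomorphism) on every such $\tilde U$. What actually saves the argument is that the contracted curves are \emph{disjoint} from $\tilde C$, and this must be proved rather than shrunk away: in every case of Proposition~\ref{prop:proj2} each contracted curve is the strict transform of a line $L\subset F$ through $p$ (a ruling line, the fiber, the exceptional section when it is a line, or a generatrix of the cone), and since $C$ is a hyperplane section not containing $L$, one has $(L\cdot C)=\deg L=1$, so $L$ meets $C$ only at $p$ and transversally there; hence $\tilde L\cap\tilde C=\varnothing$ after the blowup. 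Note also that no genericity of $C$ is needed (the proposition is about a given hyperplane section and a given $p\in C$), and your consistency check has an off-by-one error: $(C\cdot C)=\deg F=N-1$, so $(\tilde C\cdot\tilde C)=N-2=\deg F'$, which is the self-intersection of a hyperplane section of $F'\subset\PP^{N-1}$.
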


\begin{proof}
Immediate from Proposition~\ref{prop:proj2}.  
\end{proof}

\begin{note}
Even though $\PP^1$ is homogeneous, which means that its points are
indistinguishable, germs of blowups of a given neighborhood of $\PP^1$
at different points are not necessarily isomorphic. Indeed, suppose
that $C\cong \PP^1$ is a smooth hyperplane section of the
surface~$F_{n,r}$, where $n>0$ and $r>0$. If $p\in C$ does not lie on
the exceptional section $E\subset F_{n,r}$, then
Proposition~\ref{prop:proj} implies that the blowup at $p$ of the germ
of neighborhood of~$C$ is isomorphic to a neighborhood of a hyperplane
section of $F_{n-1,r}$, and if $p$ does lie on $E$,
Proposition~\ref{prop:proj} implies that the blowup in question is
isomorphic to a neighborhood of a hyperplane section of $F_{n+1,r-1}$
(observe that $C\ne E$ and $C\cap E\ne\varnothing$, so points of both
kinds are present). If the blowups at such points were isomorphic as
germs of neighborhoods, then, by virtue of
Proposition~\ref{prop:rigid}, this isomorphism would be induced by a
linear isomorphism between $F_{n-1,r}$ and $F_{n+1,r-1}$, which does
not exist.
\end{note}

\begin{proposition}\label{non-alg}
Suppose that a curve $D\cong\PP^1$ is embedded in a surface $U$. If,
blowing up $s>0$ points of the germ of neighborhood $(D,U)$, one
obtains a germ of neighborhood $(C,W)$ that is isomorphic to the germ
of neighborhood of a non-degenerate conic in $\PP^2$, then the
original germ $(D,U)$ is not algebraic.
\end{proposition}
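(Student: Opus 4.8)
The plan is to argue by contradiction, assembling three tools already at hand: the classification of algebraic germs (Proposition~\ref{prop:alg.neighborhoods}), the rigidity statement and its corollary (Proposition~\ref{prop:rigid}), and the translation of blowups of germs into projections of surfaces of minimal degree (Proposition~\ref{prop:projection}). The decisive input will be the observation made after Proposition~\ref{prop:proj} that no arrow in Figure~\ref{fig:hierarchy} points at the Veronese surface, i.e.\ that $V$ is never the image of a projection of a surface of minimal degree.

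First I would reinterpret the conic. Since the Veronese map $v$ of~\eqref{eq:Veronese} is an isomorphism of $\PP^2$ onto $V\subset\PP^5$ carrying conics to hyperplane sections, the germ of neighborhood of a non-degenerate conic in $\PP^2$ is isomorphic to the germ of neighborhood of a smooth hyperplane section of the Veronese surface $V$. In particular $(C\cdot C)=\deg V=4$, and by Proposition~\ref{blowup}(2) we get $(D\cdot D)=4+s$. Now suppose, for contradiction, that $(D,U)$ is algebraic. By Proposition~\ref{prop:alg.neighborhoods} it is isomorphic to the germ of a smooth hyperplane section $C_F$ of a surface of minimal degree $F\subset\PP^{d+1}$ with $d=(D\cdot D)=4+s$. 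Blowing up the $s$ points then translates, by $s$-fold iteration of Proposition~\ref{prop:projection}, into $s$ successive projections of $F$ from smooth points; since by Proposition~\ref{prop:proj} the image of each such projection is again a surface of minimal degree, after $s$ steps we obtain the germ of a smooth hyperplane section of a minimal-degree surface~$F'$ which, because $s>0$, is the image of a projection of a surface of minimal degree.

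On the other hand, by the previous paragraph $(C,W)$ is the germ of a hyperplane section of the Veronese surface $V$, so the Corollary to Proposition~\ref{prop:rigid} forces $F'$ and $V$ to be projectively isomorphic. But Proposition~\ref{prop:proj} shows that the image of a projection of a surface of minimal degree is always one of the surfaces $F_{n,r}$, never $V$ (this is exactly the statement that no arrow points at $V$), and no $F_{n,r}$ is projectively isomorphic to the Veronese surface. This contradiction shows that $(D,U)$ cannot be algebraic.

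The only genuinely delicate point — and hence the main thing I would check carefully — is that the iteration of Proposition~\ref{prop:projection} is legitimate: at each stage we must again be looking at the germ of a \emph{smooth} hyperplane section passing through a \emph{smooth} point of the (possibly singular, e.g.\ conical) minimal-degree surface, so that the next projection is defined and Proposition~\ref{prop:proj} applies. This holds because each intermediate curve is isomorphic to $\PP^1$ by Proposition~\ref{blowup}(1), and a smooth hyperplane section of a cone $F_{n,0}$ necessarily avoids its vertex; thus every point we blow up is a smooth point of the corresponding surface. Granting this bookkeeping, the argument is driven entirely by the asymmetry of Figure~\ref{fig:hierarchy}: a germ that becomes a conic after blowing up could only have come from a surface that projects onto $V$, and no such surface exists.
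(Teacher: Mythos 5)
Your proposal is correct and follows essentially the same route as the paper: contradiction via Proposition~\ref{prop:alg.neighborhoods}, translation of the $s$ blowups into $s$ projections via Proposition~\ref{prop:projection}, the observation that no projection of a surface of minimal degree yields the Veronese surface, and rigidity (Proposition~\ref{prop:rigid}) to conclude. Your extra remark that each projection center is a smooth point (since a smooth hyperplane section of a cone avoids the vertex) is a point the paper leaves implicit, and it is correctly handled.
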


\begin{proof}
Without loss of generality we may and will assume that $C$ is a conic
in $\PP^2$ and $W\supset C$ is an open subset of $\PP^2$. The
Veronese mapping~$v\colon\PP^2\to V$ (see~\eqref{eq:Veronese})
identifies $\PP^2$ with the Veronese surface $V\subset\PP^5$ and
$C\subset \PP^2$ with a smooth hyperplane section of~$V$.

Arguing by contradiction, suppose that the germ $(D,U)$ is algebraic. Then
Proposition~\ref{prop:alg.neighborhoods} implies that this
germ is isomorphic to the germ of neighborhood of a
hyperplane section of a surface of minimal degree~$m=4+s>4$. Hence,
this surface is of the form $F_{n,r}$, where $n+2r=m$ (see
Proposition~\ref{prop:min.degree}).

By construction, the germ of~$(C,W)$ can be obtained from the
germ $(D,U)$ by blowing up $s$ points on~$D$. Hence, by
Proposition~\ref{prop:projection}, the germ of $(C,W)$ is isomorphic
to the germ of a hyperplane section of a surface that can be obtained
from $F_{n,r}$ by $s$ consecutive projections. However,
Proposition~\ref{prop:proj} shows that the resulting surface cannot be
projectively isomorphic to~$V\subset\PP^5$
(cf.\ Figure~\ref{fig:hierarchy}). On the other hand,
Proposition~\ref{prop:rigid} implies that if germs of hyperplane
sections of two surfaces of minimal degree are isomorphic then the
surfaces are projectively isomorphic.  This contradiction shows that
the germ~$(D,U)$ is not algebraic.
\end{proof}

Now we can construct our first series of non-algebraic examples.

\begin{lemma}\label{lemma:tubular}
Suppose that $C\subset \PP^2$ is a non-degenerate conic and $s$ is a
positive integer. Then there exist $s$ lines $\lst Ls\subset\PP^2$ and
neighborhoods $W\supset C$, $W_j\supset L_j$, $1\le j\le s$, in $\PP^2$
having the following properties.

\begin{enumerate}
\item each $L_j$ intersects the conic $C$ at precisely two points,
  $p_j$ and~$q_j$;
\item all the points \lst ps are distinct, all the points \lst qs
  are distinct, and $p_i\ne q_j$ for any $i,j$.
\item\label{e3}
$W_j\cap W\cap\{\lst ps,\lst qs\}=\{p_j,q_j\}$ for each $j$.
\item\label{e4}
The open subset $W_j\cap W$ has precisely two connected components
$P_j\ni p_j$ and $Q_j\ni q_j$.
\item ${P_i}\cap {P_j}=\varnothing$\label{new:item3} whenever $i\ne
  j$, ${Q_i}\cap {Q_j}=\varnothing$ whenever $i\ne j$, and ${P_i}\cap
  {Q_j}=\varnothing$ for any $i,j$.
\end{enumerate}
\end{lemma}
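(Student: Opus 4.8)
The plan is to fix the lines by a generic choice first, and then to construct all the neighborhoods as thin tubular neighborhoods with respect to a fixed metric, controlling their pairwise intersections by a compactness argument.

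First I would choose the lines. Working in the dual plane $(\PP^2)^*$, the tangent lines to $C$ form the dual conic, a proper closed subset, while for a fixed point $x\in C$ the lines through $x$ form a line in $(\PP^2)^*$. Choosing $L_1,\dots,L_s$ one at a time, each time avoiding the dual conic together with the finitely many lines through the intersection points already produced, I obtain $s$ lines each meeting $C$ transversally in two points, with all $2s$ points $\lst ps,\lst qs$ distinct; this is exactly properties~(1) and~(2). Since $C$ is a smooth conic, a non-tangent line meets it in two distinct points with intersection multiplicity~$1$, so $C$ and $L_j$ cross like two coordinate axes near each of $p_j,q_j$; this transversality is what the construction of the neighborhoods will rely on.

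Next, fix the Fubini--Study metric on $\PP^2$, and for $\eps>0$ let $W$ be the $\eps$-tube about $C$ and $W_j$ the $\eps$-tube about $L_j$. I claim that (3)--(5) all hold once $\eps$ is small enough. The key confinement statement is that $W\cap W_j$ shrinks to $C\cap L_j=\{p_j,q_j\}$ as $\eps\to0$: if $x_n\in W^{\eps_n}\cap W_j^{\eps_n}$ with $\eps_n\to0$, then the distances from $x_n$ to $C$ and to $L_j$ both tend to~$0$, so any limit point of $(x_n)$, which exists by compactness of $\PP^2$, lies in $C\cap L_j$. Hence for small $\eps$ the set $W\cap W_j$ is contained in the union of two arbitrarily small balls about $p_j$ and $q_j$. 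Because the $2s$ points are distinct, these balls can be taken pairwise disjoint over all indices, which gives~(5); and because each $p_i,q_i$ with $i\ne j$ lies at positive distance from $L_j$, it lies outside $W_j$ for small $\eps$, which (together with the fact that all $2s$ points lie on $C\subset W$) gives~(3).

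The main obstacle is property~(4): that $W\cap W_j$ has \emph{exactly} two connected components. The confinement argument already places it inside two disjoint balls about $p_j$ and $q_j$, each containing $p_j$ (resp.\ $q_j$) since those points lie in $W\cap W_j$, so it remains to check that the two tubes meet in a connected set near each crossing. Here I would invoke transversality directly: in holomorphic coordinates $(z,w)$ centered at $p_j$ with $C=\{w=0\}$ and $L_j=\{z=0\}$, the tubular neighborhood theorem identifies $W$ and $W_j$, for small $\eps$, with sets approximating the slabs $\{|w|<\eps\}$ and $\{|z|<\eps\}$, whose intersection is a polydisk and hence connected and nonempty. Taking $\eps$ below the finitely many thresholds arising from the tubular neighborhood theorem for $C$ and for each $L_j$, from the disjointness of the balls, and from the exclusion of the remaining intersection points, all five properties hold simultaneously, completing the proof.
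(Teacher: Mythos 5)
Your proposal is correct and follows essentially the same route as the paper, whose proof is only a two-line sketch ("choose a Hermitian metric and take small enough tubular neighborhoods"); you simply supply the genericity argument for the lines and the compactness/transversality details that the paper leaves to the reader.
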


\begin{proof}
Only the assertions~(\ref{e3}) and (\ref{e4}) deserve a sketch of
proof. To justify them choose a Hermitian metric on $\PP^2$ and let
$W$ be a small enough tubular neighborhood of $C$ and \lst Ws be small
enough tubular neighborhoods of \lst Ls.
\end{proof}

The proof of the following lemma is left to the reader.

\begin{lemma}\label{NT}
Suppose that $X_1$ and $X_2$ are Hausdorff topological spaces,
$O_1\subset X_1$ and $O_2\subset X_2$ are open subsets, and $X$ is the
topological space obtained by gluing $X_1$ and $X_2$ via a
homeomorphism $\Phi\colon O_1\to O_2$.

If for any $x_1\in \bd(O_1)\subset X_1$, $x_2\in \bd(O_2)\subset X_2$,
where $\bd$ means ``boundary'',
there exist open neighborhoods $V_1\ni x_1$ in~$X_1$ and $V_2\ni x_2$
in $X_2$ such that 
\begin{equation*}
\Phi(V_1\cap O_1)\cap V_2=\varnothing,
\end{equation*}
then $X$ is Hausdorff.\qed
\end{lemma}

\begin{constr}
Suppose that $C$, \lst Ls, \lst Ws, and $W$ are as in
Lemma~\ref{lemma:tubular}.  

Put 
\begin{equation*}
U_0=W\sqcup W_1\dots\sqcup W_s  
\end{equation*}
(disjoint sum), and let $\pi_0\colon U_0\to\PP^2$ be the natural projection.
Define the equivalence relation~$\sim$ on~$U_0$ as follows: if $x,y\in
U_0$ and $x\ne y$, then $x\sim y$ if and only if  $\pi_0(x)=\pi_0(y)\in
P_1\cup\dots\cup P_s$. 

Let $U_1$ be the quotient of $U_0$ by the equivalence relation~$\sim$,
and let $\pi_1\colon U_1\to\PP^2$ be the natural projection. 

Denote the images of $C\subset W$ and $L_j\subset W_j$ in $U_1$ by
$C_1$ and $L_j'$.
\end{constr}

\begin{lemma}
In the above setting, $U_1$ is a Hausdorff
and connected complex surface and $\pi_1\colon
U_1\to\PP^2$ is a local holomorphic isomorphism.

The curves $C_1$ and $L_j'$ are isomorphic to $\PP^1$,
$(L'_j\cdot L'_j)=(L_j\cdot L_j)=1$, $(L'_j\cdot C_1)=1$ for
each~$j$, and $(C_1\cdot C_1)=(C\cdot C)=4$.
\end{lemma}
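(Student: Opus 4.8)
The plan is to realize $U_1$ as a complex surface assembled by gluing the open subsets $W,W_1,\dots,W_s\subset\PP^2$ along the open sets $P_j$, and to transport every local statement through the projection $\pi_1$. First I would fix the complex structure and the fact that $\pi_1$ is a local biholomorphism. The relation $\sim$ identifies, for each $j$, the copy of $P_j$ sitting inside $W$ with the copy sitting inside $W_j$, and on $\pi_0$-images this identification is the identity, hence holomorphic. Since the restriction of $\pi_0$ to each of $W,W_1,\dots,W_s$ is injective, no two points of one chart are ever identified, so the quotient carries a unique complex structure making the compositions $W\hookrightarrow U_0\to U_1$ and $W_j\hookrightarrow U_0\to U_1$ open holomorphic embeddings; on each of these charts $\pi_1$ is just the inclusion into $\PP^2$, so $\pi_1$ is a local holomorphic isomorphism. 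Connectedness follows because $W$ and each $W_j$ are connected (being tubular neighborhoods of the connected curves $C$ and $L_j$) and each $W_j$ is glued to $W$ along the nonempty set $P_j$.

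The main obstacle is Hausdorffness, which is exactly what Lemma~\ref{NT} together with property~(\ref{e4}) of Lemma~\ref{lemma:tubular} is designed to handle. The danger is that a point $z\in W\cap W_j$ lying outside $P_j$ produces two distinct, possibly non-separated, points of $U_1$. Here property~(\ref{e4}) is decisive: $W\cap W_j$ has exactly the two connected components $P_j$ and $Q_j$, so $P_j$ is closed in $W\cap W_j$ and $\overline{P_j}\cap Q_j=\varnothing$; consequently no point of $Q_j$ lies in the closure of $P_j$ taken in either $W$ or $W_j$ (a net in $P_j$ converging to $z\in Q_j\subset W_j$ would eventually lie in $W_j$, forcing $z\in\overline{P_j}^{\,W\cap W_j}=P_j$, a contradiction). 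This is precisely the hypothesis needed to apply Lemma~\ref{NT} to the gluing of $W$ with each $W_j$ in turn, yielding that $U_1$ is Hausdorff; the pairwise disjointness statements in property~(\ref{new:item3}) guarantee that the successive gluings do not interfere and that no non-separated pair arises between two different charts $W_i$ and $W_j$.

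Next I would read off the curves through $\pi_1$. Since $\pi_0$ is injective on $W$ and on each $W_j$, the maps $C\to C_1$ and $L_j\to L_j'$ are bijective, and because $\pi_1$ is a local biholomorphism they are isomorphisms; hence $C_1\cong C\cong\PP^1$ and $L_j'\cong L_j\cong\PP^1$, the smooth conic and the lines being rational. As $\pi_1$ identifies a neighborhood of $C_1$ (resp.\ of $L_j'$) in $U_1$ with a neighborhood of $C$ (resp.\ of $L_j$) in $\PP^2$, it carries the normal bundle $\Nc_{U_1|C_1}$ to $\Nc_{\PP^2|C}=\Oo_{\PP^1}(4)$ and $\Nc_{U_1|L_j'}$ to $\Nc_{\PP^2|L_j}=\Oo_{\PP^1}(1)$, the compact curves $C_1,L_j'$ thereby having well-defined self-intersection; this gives $(C_1\cdot C_1)=(C\cdot C)=4$ and $(L_j'\cdot L_j')=(L_j\cdot L_j)=1$.

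Finally, for the cross-intersection, a point of $C_1$ coincides with a point of $L_j'$ only when a point of $C\subset W$ and a point of $L_j\subset W_j$ have the same $\pi_0$-image lying in $P_1\cup\dots\cup P_s$; that common image lies in $W\cap W_j=P_j\sqcup Q_j$, so by property~(\ref{new:item3}) it must in fact lie in $P_j$, and being in $L_j\cap C=\{p_j,q_j\}$ it can only be $p_j$ (since $q_j\in Q_j$ is separated off). Thus $C_1$ and $L_j'$ meet at the single point over $p_j$, and because $L_j$ meets $C$ transversally at $p_j$ in $\PP^2$ while $\pi_1$ is a local isomorphism there, this intersection is transverse, whence $(L_j'\cdot C_1)=1$ for each $j$.
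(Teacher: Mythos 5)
Your proposal is correct and follows essentially the same route as the paper: Hausdorffness is obtained by applying Lemma~\ref{NT} to the gluing of $W$ with the $W_j$'s along the $P_j$'s, using properties (\ref{e4}) and (\ref{new:item3}) of Lemma~\ref{lemma:tubular} (your observation that $P_j$ is clopen in $W\cap W_j$, so $\overline{P_j}\cap Q_j=\varnothing$, is exactly what makes the paper's containment in $\bar P\cap Q$ yield the empty set), and the remaining assertions are transported through the local isomorphism $\pi_1$, which the paper dismisses as ``obvious'' and you spell out correctly.
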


\begin{proof}
If we put, in Lemma~\ref{NT}, $X_1=W$, $X_2=W_1\sqcup\dots\sqcup W_s$,
$O_1=P_1\cup\ldots \cup P_s\subset W$, $O_2=P_1\sqcup \dots\sqcup
P_s\subset W_1\sqcup\dots\sqcup W_s$, $\Phi=\mathrm{Id}$, then the
hypothesis of this lemma is satisfied if, putting
$P=P_1\cup\dots\cup P_s$ and $Q=Q_1\cup\dots \cup Q_s$, one has
\begin{equation}\label{eq:detailed}
((\bar P\cap W)\setminus P)\cap (\bar P\cap (W_1\cup\dots\cup
  W_s)\setminus P)=\varnothing.
\end{equation}
The left-hand side of \eqref{eq:detailed} is equal to
\begin{equation*}
	(\bar P\cap W\cap (W_1\cup\dots\cup
	W_s))\setminus P=(\bar P\setminus P)\cap (P\cup Q)\subset\bar P\cap Q,
\end{equation*}
which is empty by Lemma~\ref{lemma:tubular}(\ref{new:item3}). Hence,
$U_1$ is Hausdorff.

The rest is obvious.
\end{proof}

\begin{constr}\label{constr1}
In the above setting, for each $j$, $1\le j\le s$, choose two
distinct points $u_j,v_j\in L'_j$, different from the intersection
point of $L'_j$ and $C_1$. Let $U_2$ be the blowup of the surface
$U_1$ at the points $\lst us,\lst vs$. If $\sigma\colon U_2\to U_1$ is
the natural morphism, put $C_2=\sigma^{-1}(C')\subset U_2$; it is
clear that $\sigma$ is an isomorphism on a neighborhood of $C_2$; in
particular, $C_2\cong C_1\cong \PP^1$ and $(C_2\cdot C_2)=4$.

For each $j$, let $\tilde L_j\subset U_2$ be the strict transform of
$L'_j$ with respect to the blowup $\sigma\colon U_2\to U_1$. By
construction, $\tilde L_j\cong\PP^1$, $(\tilde L_j\cdot\tilde L_j)=-1$
for each $j$, and the curves $\tilde L_j$ are pairwise
disjoint. Hence, one can blow down the curves $\lst{\tilde L}s$ to
obtain a smooth complex surface $U_3$ and a curve $C_3\subset U_3$,
which is the image of~$C_2$; one has $C_3\cong \PP^1$ and $(C_3\cdot
C_3)=(C_2\cdot C_2)+s=4+s>4$.  
\end{constr}

\begin{proposition}\label{prop:constr1}
If $(C_3,U_3)$ is the germ of neighborhood from
Construction~\ref{constr1}, then this germ is not algebraic and
$\trdeg_\C\M(U_3,C_3)=2$.   
\end{proposition}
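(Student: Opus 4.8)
The plan is to deduce non-algebraicity directly from Proposition~\ref{non-alg} and to compute the transcendence degree by transporting meromorphic functions along the chain of modifications $U_3 \xleftarrow{\sigma'} U_2 \xrightarrow{\sigma} U_1$ that links $(C_3,U_3)$ to the germ of the conic in~$\PP^2$. The first task is to recognize $(C_1,U_1)$ as that conic germ. The quotient map $U_0\to U_1$ is injective on the summand $W$, since distinct points of $W$ have distinct images under $\pi_0$ and are therefore never identified; moreover the image $\tilde W$ of $W$ is open in $U_1$, because its preimage in $U_0$ is $W$ together with the open sets $P_j\subset W_j$. Hence $\pi_1$ restricts to a biholomorphism $\tilde W\xrightarrow{\sim} W$ carrying $C_1$ to the conic $C$, so $(C_1,U_1)$ is isomorphic to the germ $(C,W)$ of a non-degenerate conic in $\PP^2$. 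Since the centers $u_j,v_j$ of $\sigma\colon U_2\to U_1$ lie on the lines $L'_j$ away from $C_1$, the morphism $\sigma$ is an isomorphism over a neighborhood of $C_1$, so $(C_2,U_2)$ is again the conic germ. Writing $z_j=\sigma'(\tilde L_j)\in C_3$ for the images of the contracted $(-1)$-curves, the blowdown $\sigma'$ exhibits $(C_2,U_2)$ as the blowup of $(C_3,U_3)$ at the $s$ points $z_1,\dots,z_s$; as this blowup is the conic germ, Proposition~\ref{non-alg} applies and shows that $(C_3,U_3)$ is not algebraic.

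For the transcendence degree I would first record the upper bound $\trdeg_\C\M(U_3,C_3)\le 2$, valid for every germ by~\cite{Andreotti}. For the lower bound, observe that $\M(U_2,C_2)=\M(U_1,C_1)=\M(W,C)$ contains the germs along $C$ of two algebraically independent rational functions $x,y$ on $\PP^2$; these remain algebraically independent as germs, since any polynomial relation between them would vanish on a two-dimensional neighborhood of $C$ and hence identically on $\PP^2$. Thus $\trdeg_\C\M(U_2,C_2)=2$, and it remains to carry this over to $U_3$.

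To transport the two functions across the blowdown $\sigma'\colon U_2\to U_3$, I would use that $\sigma'$ is an isomorphism over $U_3\setminus\{z_1,\dots,z_s\}$. The functions $x,y$ on a neighborhood of $C_2$ therefore induce meromorphic functions on a neighborhood of $C_3$ punctured at the finite set $\{z_j\}$. By Levi's second extension theorem, a meromorphic function on the complement of an analytic subset of codimension $\ge 2$ in a complex manifold extends meromorphically across that subset; applied to the isolated points $z_j$, this yields meromorphic functions $\bar x,\bar y$ on a full neighborhood of $C_3$. Since $\sigma'^*\bar x$ and $x$ agree on the dense open set $U_2\setminus\bigcup_j\tilde L_j$, they coincide, and likewise $\sigma'^*\bar y=y$; hence any polynomial relation among $\bar x,\bar y$ pulls back to one among $x,y$, which is impossible. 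Therefore $\bar x,\bar y$ are algebraically independent, giving $\trdeg_\C\M(U_3,C_3)\ge 2$ and, with the upper bound, equality.

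The main obstacle is this last step: the two independent functions need not be constant on the contracted curves $\tilde L_j$, so they do not descend to $U_3$ set-theoretically, and the extension is forced only after removing the images $z_j$ and reappealing to the codimension-two extension theorem. The other point requiring care is the identification of $(C_1,U_1)$ with the honest conic germ: although the glued-in lines $L'_j$ meet $C_1$, one must verify that the image $\tilde W$ of $W$ is open and maps biholomorphically, so that these lines do not thicken the germ along~$C_1$.
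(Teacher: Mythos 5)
Your proof is correct and follows essentially the same route as the paper's: non-algebraicity via Proposition~\ref{non-alg} applied to the blowup of $(C_3,U_3)$ at the points $z_j$, and the lower bound on the transcendence degree by transporting two algebraically independent rational functions from $\PP^2$ through the local isomorphism and the blowups/blowdowns (the paper phrases this as an isomorphism of the full meromorphic function fields of $U_1$ and $U_3$, which hides the same Levi-extension step you make explicit). The one point to tighten is that in the descent across $\sigma'$ you should invoke that $x$ and $y$ are defined on all of $U_2$, being pullbacks of rational functions under the globally defined local isomorphism $\pi_1\circ\sigma$, and not merely on a neighborhood of $C_2$: the $\sigma'$-image of a neighborhood of $C_2$ alone need not contain a punctured neighborhood of the points $z_j$, so the extension theorem could not be applied there.
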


\begin{proof}
It follows from the construction that the blowup of the germ
of neighborhood of $C_3$ in $U_3$ at the $s$ points to which
\lst{\tilde L}s were blown down, is isomorphic to the germ of
neighborhood of $C_2$ in $U_2$, which is isomorphic to the of
neighborhood of the conic $C$ in $\PP^2$. Now
Proposition~\ref{non-alg} implies that the germ $(C_3,U_3)$ is not
algebraic.

Since $\pi_1\colon U_1\to\PP^2$ is a local isomorphism, the filed of
meromorphic functions on $\PP^2$, which is isomorphic to the field
$\C(X,Y)$ of rational functions in two variables, can be embedded in
the field of meromorphic functions on $U_1$. Since the surface $U_3$
is obtained from $U_1$ by a sequence of blowups and blowdowns, the
fields of meromorphic functions on $U_1$ and $U_3$ are
isomorphic. Hence, $\C(X,Y)$ can be embedded in the field of
meromorphic functions on $U_3$, which can be embedded
in~$\M(U_3,C_3)$. Thus, $\trdeg_\C\M(U_3,C_3)\ge2$, whence
$\trdeg_\C\M(U_3,C_3)=2$. This completes the proof.
\end{proof}

\section{Ramified coverings}\label{sec:ramified}

In this section we construct another series of examples of
non-algebraic neighborhoods $(C,U)$, where $C\cong \PP^1$ and
$\trdeg_\C\M(U,C)=2$. In these examples, the
self-intersection $(C\cdot C)$ may be an arbitrary positive
integer. We begin with two simple lemmas.

\begin{lemma}\label{pi1}
Suppose that $X$ is a smooth complex surface, $C\subset X$ is a
projective curve, $C\cong\PP^1$, and $U\subset X$ is a tubular
neighborhood of $C$. Then $\pi_1(U\setminus C)\cong\Z/m\Z$, where
$m=(C\cdot C)$.
\end{lemma}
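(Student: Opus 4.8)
The plan is to reduce the statement to a homotopy computation for circle bundles over $S^2$, by replacing $U\setminus C$ with a homotopy-equivalent space attached to the normal bundle. First I would invoke the tubular neighborhood theorem: since $C\cong\PP^1$ is a smooth compact real surface embedded in the smooth $4$-manifold $X$, the tubular neighborhood $U$ is diffeomorphic to a disc-bundle neighborhood of the zero section in the total space of the normal bundle $\Nc=\Nc_{X|C}$, with $C$ corresponding to the zero section. Fixing a Hermitian metric on $\Nc$ and retracting radially along the fibers, $U\setminus C$ deformation retracts onto the unit circle bundle $p\colon S(\Nc)\to C$, a locally trivial fibration with fiber $S^1$ over $C\cong S^2$. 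Hence $\pi_1(U\setminus C)\cong\pi_1(S(\Nc))$.

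Next I would run the long exact homotopy sequence of this fibration. Since $\pi_1(S^2)=0$, the relevant tail
\begin{equation*}
\pi_2(S^2)\xrightarrow{\ \partial\ }\pi_1(S^1)\to\pi_1(S(\Nc))\to\pi_1(S^2)=0
\end{equation*}
identifies $\pi_1(S(\Nc))$ with the cokernel of the connecting homomorphism $\partial\colon\pi_2(S^2)\cong\Z\to\pi_1(S^1)\cong\Z$. So everything comes down to computing this single map.

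The crux of the argument, and the step requiring the most care, is the identification of $\partial$ with multiplication by an integer equal to $m$. For the circle bundle of a complex line bundle over $S^2$ (equivalently, a principal $S^1$-bundle), the connecting map $\partial$ is given by pairing the Euler class $e(\Nc)\in H^2(S^2;\Z)\cong\Z$ against the fundamental class $[S^2]$; one can verify this normalization on the Hopf fibration $S^1\to S^3\to S^2$ (Euler number $1$, total space simply connected) and on the trivial bundle $S^2\times S^1$ (Euler number $0$, $\pi_1\cong\Z$). Since the Euler number of the complex line bundle $\Nc$ equals its degree $c_1(\Nc)[C]$, and since $\Nc\cong\Oo_{\PP^1}(m)$ with $m=(C\cdot C)$ (as already noted in the proof of Proposition~\ref{prop:dim|C|}), the image of $\partial$ equals $m\Z$. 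Therefore
\begin{equation*}
\pi_1(U\setminus C)\cong\pi_1(S(\Nc))\cong\Z/m\Z,
\end{equation*}
as claimed. Equivalently, one may recognize $S(\Nc)$ as the lens space $L(m,1)$, whose fundamental group is $\Z/m\Z$; the homotopy sequence argument above is simply a self-contained way of extracting this fundamental group without appealing to the full classification of the total space.
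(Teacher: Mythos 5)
Your proof is correct and follows essentially the same route as the paper, which simply cites the homotopy exact sequence of the fiber bundle $U\setminus C\to C$; you merely pass first to the homotopy-equivalent unit circle bundle of $\Nc_{X|C}$ and then spell out that the connecting map $\pi_2(S^2)\to\pi_1(S^1)$ is multiplication by the Euler number $m$. The extra detail (normalization via the Hopf fibration, identification of the total space as a lens space) is a sound elaboration of the same argument.
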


\begin{proof}
Immediate from the homotopy exact sequence of the fiber bundle
$U\setminus C\to C$.  
\end{proof}

\begin{lemma}\label{lemma:covering}
Suppose that $f\colon S\to T$ is a dominant morphism of smooth
projective algebraic surfaces and that $B\subset T$ is the
branch divisor of~$f$ (see the definition in
Section~\ref{subsec}). If $T\setminus B$ is simply connected, then
$\deg f=1$.  
\end{lemma}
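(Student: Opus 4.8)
The plan is to show that, away from the full set of critical \emph{values}, the map $f$ restricts to a finite topological covering, and then to deduce triviality of that covering from the simple-connectivity hypothesis---once it is corrected for the finitely many isolated critical values that may lie in $T\setminus B$.

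Write $d=\deg f$ for the number of points in a generic fibre, i.e.\ $d=[\C(S):\C(T)]$. Since we work in characteristic zero and $f$ is a dominant morphism of surfaces, generic smoothness provides a dense open $V\subseteq T$ over which $f$ is étale; in particular $df_x$ is an isomorphism for every $x\in f^{-1}(V)$. Hence the degeneracy locus $\{x\in S:\ df_x\text{ is degenerate}\}$ is a proper closed subset of $S$, and, as $f$ is proper (being a morphism of projective varieties), its image $R\subset T$ is a proper closed algebraic subset, so $\dim R\le 1$. Thus $R=B\cup Z$, where $B$ is the branch divisor (the one-dimensional part of $R$) and $Z:=R\setminus B$ is a finite set of points.

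Next I would put $T^\circ=T\setminus R$ and $S^\circ=f^{-1}(T^\circ)$. On $S^\circ$ the differential of $f$ is everywhere non-degenerate, so $f|_{S^\circ}$ is a local biholomorphism; and since properness is preserved under restriction over the open set $T^\circ$, the map $f|_{S^\circ}\colon S^\circ\to T^\circ$ is a proper local homeomorphism, hence a finite covering of degree $d$. Because $S$ is irreducible, $S^\circ$ is a non-empty Zariski-open subset of $S$ and is therefore connected in the classical topology, so this is a connected covering. It then remains to compute $\pi_1(T^\circ)$. Here $T^\circ=(T\setminus B)\setminus Z$, and $T\setminus B$ is a smooth complex surface, i.e.\ a real $4$-manifold. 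Removing the finite point set $Z$ does not change the fundamental group, since a loop is one-dimensional and a homotopy of loops two-dimensional, and by transversality each can be perturbed to avoid $Z$, whose real codimension in $T\setminus B$ is $4$. Therefore $\pi_1(T^\circ)\cong\pi_1(T\setminus B)=1$, so $T^\circ$ is simply connected; a connected covering of a simply connected space is trivial, whence $d=1$, that is, $\deg f=1$.

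The step I expect to be the main obstacle is this last fundamental-group computation---namely, making rigorous the assertion that deleting the isolated critical values $Z$ leaves $\pi_1$ unchanged, so that simple connectivity of $T\setminus B$ (rather than of the smaller étale locus $T^\circ$) suffices. This is the only genuinely nontrivial point; it rests on the general-position fact that points have real codimension $4\ge 3$ in $T\setminus B$. Everything else---properness of the restricted map, the implication étale and proper $\Rightarrow$ finite covering, and connectivity of $S^\circ$---is routine.
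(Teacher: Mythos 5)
Your proof is correct and follows essentially the same route as the paper's: identify the critical locus as $B$ together with a finite set $Z$, observe that $f$ restricts to a finite topological covering over $T\setminus(B\cup Z)$, note that deleting the finite set $Z$ from the $4$-manifold $T\setminus B$ does not change $\pi_1$, and conclude that the connected covering is trivial. You merely supply more detail (generic smoothness, properness of the restriction, connectivity of $S^\circ$) than the paper, which states these steps without elaboration.
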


\begin{proof}
The critical locus $R\subset T$ of $f$ is of the form $B\cup E$, where $B$ is the branch
divisor and $E$ is a finite set. The mapping
\begin{equation*}
f|_{S\setminus f^{-1}(R)}\colon S\setminus f^{-1}(R)\to T\setminus R
\end{equation*}
is a topological covering. Since the subset $E\subset T\setminus B$ is
finite and $T\setminus B$ is smooth, fundamental groups of $T\setminus
R$ and $T\setminus B$ are isomorphic, so $T\setminus
R$ is also simply connected, whence the result.
\end{proof}

\begin{constr}\label{constr2}
Fix an integer $n>0$. We are going to construct a certain neighborhood
of $\PP^1$ with self-intersection~$n$.

To that end, suppose that $X\subset\PP^{2n+1}$ is a non-degenerate
surface of degree~$2n$ which is not the cone $F_{2n,0}$ and, if $n=2$,
not the Veronese surface~$V$. Let $C\subset X$ be a smooth hyperplane
section, and let $U\supset C$, $U\subset X$ be a tubular neighborhood
of~$C$. By virtue of Lemma~\ref{pi1} one has $\pi_1(U\setminus
C)=\Z/2n\Z$. Hence, there exists a two-sheeted ramified covering
$\pi\colon V\to U$ that is ramified along~$C$ with index~$2$. If
$C'=\pi^{-1}(C)$, then $C'\cong\PP^1$ and $(C'\cdot C')=n$.
\end{constr}

\begin{proposition}\label{prop:constr2}
If $(C',V)$ is the neighborhood from Construction~\ref{constr2}, then
$\trdeg_\C\M(V,C')=2$ and  
the neighborhood $(C',V)$ is not algebraic.  
\end{proposition}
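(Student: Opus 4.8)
```latex
\begin{proof}[Proof proposal]
The plan is to split the statement into two independent assertions and
handle them separately. For the transcendence degree, I would argue
exactly as in the proof of Proposition~\ref{prop:constr1}. The covering
map $\pi\colon V\to U$ is a dominant holomorphic mapping onto an open
subset of the algebraic surface $X$, so pullback along $\pi$ embeds the
field of germs of meromorphic functions along $C\subset X$ into
$\M(V,C')$. Since $C\subset X$ is a smooth curve with $(C\cdot C)>0$ on
an algebraic surface, the field of germs of meromorphic functions
along~$C$ already has transcendence degree~$2$ over~\C (it contains the
restriction of the rational function field of~$X$). Therefore
$\trdeg_\C\M(V,C')\ge 2$, and combined with the Andreotti bound
$\trdeg_\C\M(V,C')\le 2$ quoted in Section~\ref{subsec}, this gives
equality.

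The substantive part is non-algebraicity, and here I expect the main
obstacle. Arguing by contradiction, suppose $(C',V)$ is algebraic. Since
$(C'\cdot C')=n>0$, Proposition~\ref{prop:alg.neighborhoods} tells us the
germ $(C',V)$ is isomorphic to the germ of a smooth hyperplane section
$C'\subset F$ in a surface of minimal degree $F\subset\PP^{n+1}$, where
$F=F_{n',r'}$ with $n'+2r'=n$, or (only when $n=2$) $F=V\subset\PP^5$
is the Veronese surface. The strategy is to compare the fundamental
group of a punctured neighborhood of $C'$ computed two ways, or more
precisely to exploit the covering structure to derive a contradiction
with the algebraicity of the base. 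By Lemma~\ref{pi1}, a tubular
neighborhood of the curve $C'$ in \emph{any} smooth surface containing it
with self-intersection~$n$ has punctured fundamental group $\Z/n\Z$; this
is consistent on both sides and so is not itself the source of the
contradiction. Instead I would use the covering directly: if $(C',V)$
were the germ of $C'\subset F$ with $F$ algebraic, then the two-sheeted
cover $\pi\colon V\to U$ would exhibit the germ of the algebraic
neighborhood $(C,U)$ of $C\subset X$ (self-intersection $2n$) as a
\emph{quotient} of the algebraic germ $(C',F)$ by a $\Z/2\Z$-action
ramified along $C'$.

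The idea is then to algebraize this covering and apply
Lemma~\ref{lemma:covering}. Concretely, the involution on $V$ fixing
$C'$ pointwise should, via the assumed isomorphism with the germ of
$C'\subset F$, be realized by a biholomorphism of a neighborhood of the
hyperplane section $C'$ in the projective surface $F$; by
Proposition~\ref{prop:rigid} (rigidity of germs of hyperplane sections of
surfaces of minimal degree) this biholomorphism is the restriction of a
projective automorphism of $F$, hence extends to a genuine involution
$\iota$ of the projective surface $F$. The quotient $F/\iota$ is then a
projective algebraic surface carrying $X$'s germ $(C,U)$, and the
morphism $F\to F/\iota$ is a degree-$2$ dominant morphism of projective
surfaces branched along (the image of) $C'$, i.e.\ along a curve with
self-intersection $2n>0$ which is therefore ample on $X=F/\iota$. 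Now
the key point: the complement of an ample curve in a surface of minimal
degree is simply connected, so Lemma~\ref{lemma:covering} forces
$\deg(F\to F/\iota)=1$, contradicting that it is a two-sheeted covering.

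The hardest step is making the passage from the analytic germ-level
involution to an \emph{algebraic} degree-$2$ morphism of projective
surfaces rigorous: one must check that the $\Z/2\Z$-action extends from a
neighborhood of $C'$ to all of $F$ (using Proposition~\ref{prop:rigid} to
linearize it) and that the resulting quotient is the \emph{algebraic}
model of $X$ rather than merely an analytic one, so that
Lemma~\ref{lemma:covering} genuinely applies to smooth projective
surfaces. Here I would desingularize if necessary (all surfaces of
minimal degree have rational singularities, so the cone cases are
harmless after blowup) and verify that the branch divisor is exactly the
image of $C'$, with the finite ramification set absorbed as in the proof
of Lemma~\ref{lemma:covering}. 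Once the covering is algebraized, the
simple-connectivity of the complement of the ample curve closes the
argument.
\end{proof}
```
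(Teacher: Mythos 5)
Your argument for $\trdeg_\C\M(V,C')=2$ is correct and is the paper's argument. For non-algebraicity you take a genuinely different route: you extend the deck involution $\iota$ of $\pi\colon V\to U$ to a projective involution $\Phi$ of the minimal-degree surface $F$ via Proposition~\ref{prop:rigid} (this step is legitimate: $\Phi^2$ is the identity near $C'$, hence everywhere) and then try to apply Lemma~\ref{lemma:covering} to $F\to F/\Phi$. The paper instead pulls back a basis of $H^0(X,\Oo_X(C))$ along $\pi$, extends these germs by Lemma~\ref{lemma:extn2} with $r=2$, and thereby produces a degree-$2$ morphism $q\colon X_2\to X$ onto the \emph{original smooth} surface $X$; it then pins down the branch divisor as exactly $C$ by a Riemann--Hurwitz computation on a general hyperplane section, and uses that $X\setminus C$ is a $\C$-bundle over $\PP^1$, hence simply connected.

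The gap in your version is the passage from the quotient $F/\Phi$ to the contradiction. You write ``$X=F/\iota$'', but all you know is that $F/\Phi$ agrees with $(C,U)\subset X$ \emph{near the curve}; globally $F/\Phi$ can be singular at the images of isolated fixed points of $\Phi$ lying off $C'$, and a Lefschetz fixed-point count on $F\cong\Ff_{n'}$ shows such points are forced to exist (the fixed locus has Euler characteristic $4$ while $\chi(C')=2$). At such a point the complement of the branch curve in $F/\Phi$ is \emph{not} simply connected --- indeed the restriction of $F\to F/\Phi$ over that complement is a connected unramified double cover, so Lemma~\ref{lemma:covering} (which moreover requires smooth projective surfaces) yields no contradiction there. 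Your supporting claim that ``the complement of an ample curve in a surface of minimal degree is simply connected'' is also false in general: $V\setminus(\text{smooth hyperplane section})\cong\PP^2\setminus(\text{conic})$ has fundamental group $\Z/2\Z$, which is precisely why Construction~\ref{constr2} excludes $X=V$ (and $X=F_{2n,0}$) --- hypotheses your argument never invokes. To close the gap you would have to prove that $\Phi$ has no isolated fixed points, or equivalently identify $F/\Phi$ with the smooth surface $X$ globally (e.g.\ by verifying the hypotheses of Proposition~\ref{prop:rigid} for the quotient), and separately justify the simple connectivity of $X\setminus C$ as the paper does; as written, the contradiction is not established.
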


\begin{proof}
The neighborhood $(C,U)$ is algebraic, so $\trdeg_\C\M(U,C)=2$, and
the morphism $\pi\colon V\to U$ induces an embedding of $\M(U,C)$ in
$\M(V,C')$, hence $\trdeg_\C\M(V,C')\ge2$, hence this transcendence
degree equals~$2$.

To prove the non-algebraicity of $(C',V)$, assume the converse.  Then,
by virtue of Proposition~\ref{prop:alg.neighborhoods}, the germ of the
neighborhood $(C',V)$ is isomorphic to the germ of neighborhood of a
smooth hyperplane section of a non-degenerate surface
$X'\subset\PP^{n+1}$, $\deg X'=n$; we will identify $C'$ with this
hyperplane section and $V$ with a neighborhood of $C'$ in $X'$.

Let \lst [0]f{2n} be a basis of the space $H^0(X,\Oo_X(C))$ (i.e., of
the space of meromorphic functions on $X$ with at worst a simple pole
along~$C$). For each $j$, the function $(f_j|_U)\circ\pi$ is a
meromorphic function on $V$ with at worst a pole of order~$2$
along~$C'$. Using Lemma~\ref{lemma:extn2} (in which one puts $r=2$),
one sees that there exist meromorphic functions $\lst[0]g{2n}\in
H^0(X',\Oo_{X'}(2C'))$ such that, for each~$j$, the germ of $g_j$
along~$C$ is the same as that of $(f_j|_U)\circ\pi$. 

Choose a basis \lst[0]hN is of $H^0(X',\Oo_{X'}(2C'))$, and let
$X_1\subset \PP^N$ be the image of $X'$ under the embedding
\begin{equation*}
x\mapsto (h_0(x):\dots:h_N(x))  
\end{equation*}
(this is the embedding defined by the complete linear system
$|2C|=|\Oo_{X'}(2)|$). 
If $g_j=\sum a_{jk}h_k$ for each $j$, $0\le
j\le 2n$, then the matrix $\|a_{jk}\|$ defines a rational mapping
$p\colon X_1\dasharrow X$, induced by a linear projection
$\bar p\colon \PP^N\dasharrow \PP^{2n}$.  Hence,
\begin{equation}\label{eq:ineq.deg}
\deg X\le\frac{\deg X_1}{\deg p},
\end{equation}
and the equality is attained if and only if the rational mapping $p$
is regular.

On the other hand, $\deg X_1=4\deg X'=4n$, $\deg X=2n$, and $\deg p\ge
2$ since the restriction of $p$ to~$V\subset X'\cong X_1$ coincides
with our ramified covering~$\pi\colon V\to U$. Now it follows
from~\eqref{eq:ineq.deg} that the projection $p$ is regular (i.e., its
center does not intersect $X_1$) and $\deg p=2$.

If $X'\subset\PP^{n+1}$ is not a cone (i.e., if $X'\ne F_{n,0}$), put
$X_2=X_1$ and $q=p$, and if $X'$ is the cone~$F_{n,0}$, put $X_2=\Ff_n$ and
$q=p\circ \sigma$, where $\sigma\colon \Ff_n\to F_{n,0}=X'$ is the
standard resolution. So, we have a holomorphic mapping $q\colon X_2\to
X$, $\deg q=2$. One has either $X_2\cong\Ff_n$, or $X_2\cong\PP^2$ (the
latter case is possible only if $n=4$ and $X'\subset\PP^5$ is the
Veronese surface).

Since $q$ agrees with $\pi$ on a neighborhood of the curve $C'\subset
X_2$, the curve $C\subset X$ is contained in the branch
divisor of~$q$. Let us show that the branch divisor of $q\colon X_2\to X$
coincides with~$C$.

To that end, denote this branch divisor by $B\subset X$. Let $D\subset
X$ be a general hyperplane section, and put $D_2=q^{-1}(D)\subset X_2$. For a
general $D$, one has $D\cong\PP^1$, $D_2$ is a smooth and connected
projective curve, and the morphism $q|_{D_2}\colon D_2\to\PP^1\cong D$
is ramified over $\deg B$ points.

If $n=4$ and $X'$ is the Veronese surface $V\subset\PP^5$, then
$(X_2,\Oo_{X_2}(D))\cong (\PP^2,\Oo_{\PP^2}(4))$, so the curve $D_2$
is isomorphic to a smooth plane quartic. Such a curve does not admit a
mapping to $\PP^1$ of degree~$2$, so this case is impossible. 

Thus, $X'=F_{k,l}$, where $k+2l=n$. In the notation of
Section~\ref{recap}, the divisor $D_2\subset X_2$ is equivalent to
$2(e+(k+l)f)$; since the canonical class~$K_{X_2}$ of the surface
$X_2\cong\Ff_k$ is equivalent to $-2e-(k+2)f$, one has, denoting the
genus of $D_2$ by~$g$,
\begin{equation*}
2g-2=(D_2\cdot D_2+K_{X_2})=(2e+2(k+l)f\cdot (k+2l-2)f)=2(n-2),  
\end{equation*}
whence $g=n-1$. Applying Riemann--Hurwitz formula to the degree~$2$
morphism $q|_{D_2}\colon D_2\to D$, one sees that the number of its
branch points equals~$2n$. So,
$\deg B=2n$. Since $B\supset C$ and $\deg C=2n$, one
has $B=C$.

Observe now that $X\setminus C$ is simply connected since $X\setminus
C$, as a topological space, is a fiber bundle over $\PP^1$ with the
fiber~\C, and the complement $X_2\setminus q^{-1}(C)$ is
connected. Applying Lemma~\ref{lemma:covering} to the mapping $q\colon
X_2\to X$, one sees that $\deg q=1$.  We arrived at a contradiction.
\end{proof}

\bibliographystyle{amsplain}
\bibliography{ample}

\end{document}